\newcommand{\PP}{\mathbb{P}}
\newcommand{\CC}{\mathbb{C}}
\newcommand{\RR}{\mathbb{R}}
\newcommand{\FF}{\mathbb{F}}
\let\Re\relax
\let\Im\relax
\DeclareMathOperator{\Re}{Re}
\DeclareMathOperator{\Im}{Im}
\DeclareMathOperator{\argmin}{argmin}
\newtheorem{theorem}{Theorem}
\newtheorem{lemma}{Lemma}
\newtheorem{corollary}[theorem]{Corollary}
\newtheorem{conjecture}{Conjecture}
\theoremstyle{definition}
\theoremstyle{definition}
\newtheorem{definition}{Definition}
\theoremstyle{remark}
\newtheorem{remark}{Remark}
\title[A Sylvester-Gallai result for Concurrent Lines in \texorpdfstring{$\CC^2$}{the Complex Plane}]{A Sylvester-Gallai Result for Concurrent Lines in the Complex Plane}
\thanks{This research was done as part of the 2019 CUNY Combinatorics REU, supported by NSF awards DMS-1802059 and DMS-1851420.}
\author{Alex Cohen}
\date{June 2020}
\subjclass[2010]{52C30, 51A45}
\address{Yale University, 10 Hillhouse Ave, New Haven, CT 06511, USA}
\email{alex.cohen@yale.edu}
\keywords{Sylvester-Gallai theorem, Green's identity}
\begin{document}
\begin{abstract}
We show that if a finite non-collinear set of points in $\mathbb{C}^2$ lies on a family of $m$ concurrent lines, and if one of those lines contains more than $m-2$ points, there exists a line passing through exactly two points of the set. The bound $m-2$ in our result is optimal. Our main theorem resolves a conjecture of Frank de Zeeuw, and generalizes a result of Kelly and Nwankpa.
\end{abstract}

\maketitle

\section{Introduction}
The Sylvester-Gallai theorem says that any finite set of points in the real plane, not all lying on one line, determines a line passing through exactly two points of the set---these are called \textit{ordinary lines}.
The Sylvester-Gallai theorem has been intensely studied over the real numbers, and many variants have been introduced; see \cite{BMP,GT} for an overview. 

This theorem fails over the complex numbers: there are finite sets of points in $\CC^2$ having no ordinary lines. These sets are known as \textit{complex Sylvester-Gallai configurations}.
One such example is the \textit{Hesse configuration}, which consists of the nine inflection points of an elliptic curve. 
This arrangement realizes the affine plane $\FF_2^3$ in $\CC^2$ and is unique up to a projective automorphism. 
The \textit{Fermat configurations} are an infinite family of examples on $3n$ points generalizing the Hesse configuration; these arise as the inflection points of a Fermat curve, and the points of these configurations always lie on three lines.
Beyond the Fermat configurations, there are two known exceptional configurations in $\CC^2$: the Klein configuration with 21 points and the Wiman configuration with 45 points. 
For more details on these configurations, see \cite{Berger,H,Pokora}.


Whereas much is known about Sylvester-Gallai theory in the real plane, very little is known about Sylvester-Gallai theory in the complex plane. One major result on complex line configurations is Hirzebruch's inequality \cite{H}, which shows that any finite set of points in $\CC^2$ not all lying on one line must determine a line passing through two or three points. The proof of this theorem uses a deep inequality from algebraic geometry. Langer \cite{Langer} introduced a stronger version of this inequality along the same lines and using related techniques. Kelly applied Hirzebruch's inequality to prove that there are no complex Sylvester-Gallai configurations lying properly in $\CC^3$ \cite{K}---but beyond Kelly's theorem, Hirzebruch's inequality has not led to further progress in the study of ordinary lines in complex space. 

In another strand of work, Motzkin \cite{Motzkin} initiated a study of Sylvester-Gallai configurations with few points lying in various affine planes, which Kelly \& Nwankpa \cite{KN} extended in order to classify Sylvester-Gallai configurations up to 14 points in planes over various fields. 
In their analysis, Kelly \& Nwankpa proved the following theorem.
\begin{theorem}[Kelly \& Nwankpa, 1973]\label{no_three_concurrent}
There are no Sylvester-Gallai configurations in $\CC^2$ lying on three concurrent lines. 
\end{theorem}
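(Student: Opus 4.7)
The plan is to translate the Sylvester-Gallai hypothesis into a sumset inclusion in $\CC$ and then close via the torsion-free inequality $|A+B|\geq |A|+|B|-1$. Place $p$ at the origin and parametrize each $L_i$ linearly by $\CC$. A direct $2\times 2$ determinant calculation shows that after a suitable rescaling of the affine parameter on each $L_i$, three points of $L_1,L_2,L_3$ with respective nonzero parameters $a,b,c$ are collinear in $\CC^2$ if and only if
\[
\frac{1}{a}+\frac{1}{b}+\frac{1}{c}=0.
\]
Let $B_i\subseteq\CC$ consist of the reciprocals $1/a$ as $a$ runs over the nonzero parameters of points in $S\cap L_i$, and set $n_i=|B_i|$.

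Next I would extract the main sumset inclusions. Any line $\ell$ not through $p$ meets each $L_i$ in at most one point, so $|\ell\cap S|\leq 3$; hence a non-ordinary such $\ell$ meets each $L_i$ in exactly one point of $S$. Applied to the line through arbitrary $P\in S\cap L_1\setminus\{p\}$ and $Q\in S\cap L_2\setminus\{p\}$, which is distinct from $L_1,L_2,L_3$, this forces a third point of $S$ to lie on $L_3$ and yields $B_1+B_2\subseteq -B_3$. By cyclic symmetry, $B_j+B_k\subseteq -B_i$ for every $\{i,j,k\}=\{1,2,3\}$. The inequality $|A+B|\geq |A|+|B|-1$ in $\CC$ (valid because a generic $\RR$-linear projection to $\RR$ is injective on any given finite set and preserves addition) then gives $n_i\geq n_j+n_k-1$ for each cyclic permutation, and summing the three inequalities yields $n_1+n_2+n_3\leq 3$.

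A short case analysis finishes the argument. If $p\in S$, then $L_i$ carries $n_i+1$ points of $S$ and non-ordinariness forces $n_i=0$ or $n_i\geq 2$; since $S$ is non-collinear, at least two of the $L_i$ must satisfy $n_i\geq 2$, contradicting $n_1+n_2+n_3\leq 3$. If $p\notin S$, then $n_i=2$ would make $L_i$ ordinary, so $n_i\in\{0,1\}\cup\{3,4,\ldots\}$; the tuples in this set with $\sum n_i\leq 3$ and at least two nonzero entries are, up to permutation, $(1,1,0)$ and $(1,1,1)$, and in either case $S$ has at most three points which must be collinear, again contradicting the hypothesis. The remaining tuples $(0,0,0)$, $(1,0,0)$, $(3,0,0)$ place $S$ on at most one line.

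The main obstacle I anticipate is the coordinate choice: one must reparametrize each $L_i$ by the reciprocal $1/a$ so that the geometric collinearity condition becomes an additive identity. After that, the rest is a standard application of the torsion-free sumset inequality together with bookkeeping. The additive structure is specific to the pencil through $p$ and degrades as more concurrent lines are added, which suggests why the main theorem of the paper requires a more sophisticated tool.
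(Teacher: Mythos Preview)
Your argument is correct and genuinely different from the paper's. The paper sends the concurrency point to $[1:0:0]$ and one of the three lines to the line at infinity; for each infinite point $[k:-1:0]\in S$ it looks at the pencil $x+ky=d$ and selects the line whose $x$-intercept has minimal real part. That extremal line must hit the two leftmost finite points (one on each horizontal line), which pins down $k$ and forces the infinite line to carry a single point of $S$. The whole proof is an ``order by real part and take the extreme line'' argument.

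Your route instead converts the collinearity condition on three concurrent lines into the additive identity $\tfrac{1}{a}+\tfrac{1}{b}+\tfrac{1}{c}=0$ and then applies the Cauchy--Davenport/Freiman inequality $|A+B|\ge |A|+|B|-1$ in a torsion-free group, obtaining the very clean global bound $n_1+n_2+n_3\le 3$. This is slick and requires no coordinate juggling with the line at infinity. Two small points: the three inequalities $n_i\ge n_j+n_k-1$ need all $B_i$ nonempty, but the inclusion $B_1+B_2\subseteq -B_3$ itself already shows that if two $n_i$ are positive then so is the third, and non-collinearity gives you two positive to start; and in the $p\notin S$, $(1,1,1)$ case the three points need not be collinear a priori --- rather, non-collinearity would produce three ordinary lines, so either way you contradict the hypothesis. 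Both are cosmetic.

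The trade-off is exactly the one you identify at the end: the paper's extremal/ordering method is what scales to $m$ concurrent lines and ultimately feeds into the convex-analysis proof of the main theorem, while your additive-combinatorics argument is tailored to three lines (the relation $B_i+B_j\subseteq -B_k$ breaks down for $m\ge 4$ because the third point of a non-ordinary line can land on any of the remaining $m-2$ lines). It is worth noting that your use of a generic $\RR$-linear projection to import the sumset bound is morally the same ``use the order on $\RR$'' move the paper makes explicitly; the two proofs exploit the same underlying resource in different packaging.
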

In fact, Kelly \& Nwankpa classified \textit{all} Sylvester-Gallai configurations, over any field, lying on precisely three lines---but we will not need their more general result here. In this paper we extend Kelly \& Nwankpa's theorem to deal with many concurrent lines: we show that Sylvester-Gallai configurations lying on a family of concurrent lines cannot have many points. Our main theorem is the following.

\begin{theorem}\label{concurrent_bound}
If a non-collinear set $S \subset \CC^2$ lies on a family of $m$ concurrent lines, and if one of those lines contains more than $m-2$ points of $S$ (not including the point of concurrency), then the set admits an ordinary line.
\end{theorem}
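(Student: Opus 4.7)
I would argue by contradiction: assume $S$ admits no ordinary line. Label the concurrent lines $\ell_1, \ldots, \ell_m$ meeting at $p$ so that $a := |S \cap \ell_1 \setminus \{p\}| \geq m-1$, and write $b := |S \setminus \ell_1|$.

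The basic geometric input is the following. For every $q \in S \cap \ell_1 \setminus \{p\}$ and every line $L \ni q$ with $L \neq \ell_1$, $L$ cannot pass through $p$ (else $L = \ell_1$). Hence $L$ meets each $\ell_i$ ($i \geq 2$) in a single non-$p$ point, and $L \cap S \cap \ell_1 = \{q\}$. The absence of ordinary lines therefore forces any such $L$ passing through at least one point of $S \setminus \ell_1$ to pass through at least two such points. Fixing instead $r \in S \setminus \ell_1$ and applying this to the $a$ distinct lines joining $r$ to the $a$ points of $S \cap \ell_1 \setminus \{p\}$, each such line must carry an additional point of $S \setminus \ell_1$, and these additional points are pairwise distinct (two distinct lines through $r$ meet only at $r$). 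Consequently $b \geq a + 1 \geq m$.

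I would then induct on $m$. The base case $m = 3$ is exactly Theorem \ref{no_three_concurrent}, since $a \geq 2$ rules out collinearity. For $m \geq 4$, the natural strategy is to apply Theorem \ref{no_three_concurrent} to restrictions $S_{ij} := S \cap (\ell_1 \cup \ell_i \cup \ell_j)$ for well-chosen $i, j \in \{2, \ldots, m\}$; each such $S_{ij}$ sits on three concurrent lines and is non-collinear, hence admits an ordinary line $L$ of the sub-configuration. The aim is to promote $L$ to an ordinary line of the whole set $S$ by ensuring that $L$ does not pick up additional points from the omitted lines $\ell_k$, $k \notin \{1, i, j\}$. This would be arranged either by pigeonholing over the $\binom{m-1}{2}$ pairs $(i,j)$, or by a secondary induction that first absorbs lines $\ell_k$ carrying only one point of $S$ (where the quantitative slack $a > m-2$ is essential for preserving the hypothesis as one decreases $m$).

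The main obstacle is precisely this lifting step: the extra points that $L$ may acquire on the omitted $\ell_k$ can in principle obstruct every candidate ordinary line across all choices of $(i,j)$, and the argument must use the strict inequality $a > m-2$ to rule this out --- Hesse-type configurations saturate the bound at $a = m-2$ when $m = 4$, so the threshold is tight. An alternative, analytic route (as hinted by the keyword ``Green's identity'' in the abstract) would be to coordinatize the lines through $p$ by complex slopes, translate the no-ordinary-line condition into an identity among meromorphic differentials on the pencil, and derive a contradiction via a residue / Green's-theorem computation whose balance fails precisely when $a$ exceeds $m-2$.
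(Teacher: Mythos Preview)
Your proposal is not a proof: it is an outline whose decisive step---``promoting'' an ordinary line of a three-line restriction $S_{ij}$ to an ordinary line of $S$---is explicitly left open, and the mechanisms you suggest for closing it do not work. The pigeonhole over $\binom{m-1}{2}$ pairs carries no force, since a single ordinary line of $S_{ij}$ can hit one point on each of the $m-3$ omitted lines $\ell_k$, and nothing in your count prevents this from happening for every pair simultaneously. The ``absorb lines with one point'' secondary induction also fails: removing a point $r$ from $S$ can create ordinary lines of $S\setminus\{r\}$ that pass through $r$ and are therefore not ordinary in $S$, so the inductive hypothesis does not transfer back. More fundamentally, your argument never uses anything specific to $\CC$ beyond the base case, yet the theorem is genuinely a complex-geometry statement (the tightness at $m-2$ comes from the Fermat configurations), so a purely combinatorial reduction to $m=3$ is unlikely to succeed.

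The paper's proof is entirely different in structure. After sending the concurrency point to $[1:0:0]$ and the crowded line to the line at infinity, one rotates so that the $x$-coordinates of the finite points have distinct real parts and selects on each finite line $\ell_a$ the point $(x_a^*,y_a)$ of minimal $\Re(x)$. For each point $[k:-1:0]$ at infinity, the line through it with minimal real $x$-intercept must (by the no-ordinary-line hypothesis) hit at least two of these extremal points, and this defines an edge in a graph $G$ on the $m-1$ vertices $y_1,\dots,y_{m-1}$; the edges of $G$ biject with points on the crowded line. The heart of the proof is showing $G$ is acyclic, hence has at most $m-2$ edges. This is done by introducing the piecewise-linear convex function $u(y)=\sup_{(k,d)}\Re(d-ky)$, applying Green's identity to it on the polygonal region bounded by a putative cycle, and computing that the boundary integral of the inward normal derivative is simultaneously strictly negative (by strict convexity) and exactly $\sum_a \Im(x_{a+1}^*-x_a^*)=0$. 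Your closing sentence gestures at an analytic route, but the actual construction---the real-part ordering, the extremal-point graph, and the specific convex function whose normal derivatives telescope---is the entire content of the proof and is absent from your proposal.
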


\begin{figure}
\includegraphics[width=200px]{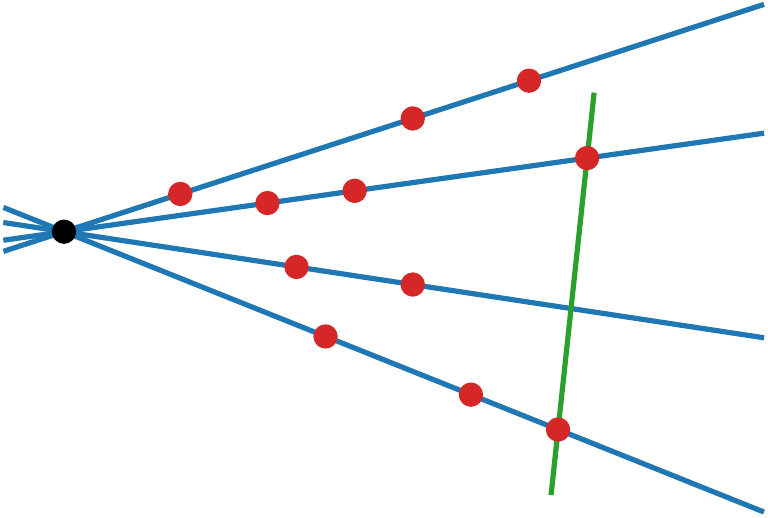}
\caption{A family of points lying on four concurrent lines; the green line is an ordinary line. Theorem \ref{concurrent_bound} says that if one of these lines passes through many points, the set must admit an ordinary line.}
\label{fig:concurrent_lines}
\end{figure}

See Figure \ref{fig:concurrent_lines} for the situation in which Theorem \ref{concurrent_bound} applies. Note that the figures in this paper generally depict geometry in $\RR^2$, but should be interpreted as representing situations in $\CC^2$---they are useful by way of analogy, but our statements and arguments are not particularly interesting when applied to the real plane. 

The $m-2$ bound in this theorem is optimal. Indeed, the Fermat configuration on $3n$ points can be embedded onto $n + 2$ concurrent lines, by choosing the point of concurrency to be one of the intersection points between the three lines the configuration lies on. Two of those $n+2$ lines will pass through $n$ points of the configuration, and the rest pass through one point. Because this is a Sylvester-Gallai configuration, Theorem \ref{concurrent_bound} says that the maximum number of points on one line is $(n+2) - 2 = n$, and that maximum is achieved. 

The proof of Theorem \ref{concurrent_bound} involves ordering complex numbers by their real part, which seems to be a new approach to complex Sylvester-Gallai theory. This approach was motivated by Sylvester-Gallai theory in the real plane: all proofs of the Sylvester-Gallai theorem rely in some way on the ordering of the real numbers, and it turns out that although there is no field-ordering of the complex numbers, an ordering which respects addition suffices to obtain our result. Interestingly, a key step of the proof is an application of an inequality from multivariable calculus applied to a piecewise linear, real-valued function on the complex line---despite several attempts, we were not able to find a purely combinatorial alternative, and it seems that this use of analysis is essential to our proof. 

Theorem \ref{concurrent_bound} resolves Conjecture 4.6 from \cite{Z}.
\begin{corollary}\label{corollary}
The only Sylvester-Gallai configuration that can be embedded on four concurrent lines is the Hesse configuration on nine points.
\end{corollary}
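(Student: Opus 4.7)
The plan is to combine Theorem \ref{concurrent_bound} with Theorem \ref{no_three_concurrent} and the known uniqueness of the smallest complex Sylvester-Gallai configuration. Let $S$ be a Sylvester-Gallai configuration embedded on four lines concurrent at a point $p$. Since $S$ admits no ordinary line, Theorem \ref{no_three_concurrent} rules out $S$ lying on just three concurrent lines through $p$, so each of the four lines must genuinely contain some point of $S$ distinct from $p$. Applying Theorem \ref{concurrent_bound} with $m = 4$ then tells me that each of these lines contains at most $m - 2 = 2$ points of $S$ other than $p$.

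I would then split on whether $p \in S$. If $p \notin S$, any concurrent line containing two points of $S$ would itself be an ordinary line, since those would be the only points of $S$ on it; so each line would contain at most one point of $S$, giving $|S| \le 4$, which is far too few for a complex Sylvester-Gallai configuration (as the smallest such, the Hesse configuration, already has nine points). Therefore $p \in S$, and each concurrent line contains $p$ together with some other points of $S$. A line containing $p$ plus exactly one other point would again be ordinary, so each of the four lines contains $p$ plus exactly two other points of $S$. This forces $|S| = 1 + 4 \cdot 2 = 9$.

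Finally, I would invoke the classification of small Sylvester-Gallai configurations due to Kelly \& Nwankpa \cite{KN}, by which the Hesse configuration is the unique complex Sylvester-Gallai configuration on nine points, completing the argument. The main obstacle is really just arranging the case analysis so that every alternative outcome is eliminated: Theorem \ref{concurrent_bound} does the heavy lifting by ruling out lines with many points, Theorem \ref{no_three_concurrent} prevents the family from effectively collapsing to three lines, and the identification with the Hesse configuration is then a direct appeal to the existing classification cited in the introduction.
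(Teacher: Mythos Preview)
Your proof is correct and follows essentially the same route as the paper: apply Theorem \ref{concurrent_bound} with $m=4$ to cap the points per line, conclude $|S|\le 9$, and then cite Kelly \& Nwankpa's classification \cite{KN}. The paper's version is terser---it skips your case split on whether $p\in S$ and your explicit appeal to Theorem \ref{no_three_concurrent}, absorbing those edge cases into the single bound $|S|\le 9$ and the citation of \cite{KN}---but the strategy is the same.
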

\begin{proof}
By Theorem \ref{concurrent_bound}, if a Sylvester-Gallai configuration lies on four concurrent lines, each line has at most two points aside from the point of concurrency, so the entire configuration has at most nine points. Kelly \& Nwankpa's results \cite{KN} imply that the only possibility is the Hesse configuration.  
\end{proof}

Unpublished computational work by the author suggests that Corollary \ref{corollary} can be extended to five concurrent lines: the only Sylvester-Gallai configurations on up to five concurrent lines are the Fermat configurations on nine and twelve points. For simplicity, we leave out further discussion of this extension and a rigorous proof.

\section{Proof of the theorem}\label{proof_of_theorem_section}

\subsection{Simplest case: new proof of Kelly \& Nwankpa's theorem}\label{section:simplified_kn}
Before we prove Theorem \ref{concurrent_bound}, we give a short proof of Kelly \& Nwankpa's Theorem \ref{no_three_concurrent}: there are no Sylvester-Gallai configurations lying on three concurrent lines. In what follows, we use homogenous coordinates in $\PP\CC^2$, and we use the notation $(x,y) = [x : y : 1] \in \PP\CC^2$ for the embedding of $\CC^2$ in $\PP\CC^2$ for which the line at infinity is $\{[x :y : 0]\ |\ x,y \in \CC, (x,y) \neq (0, 0)\}$. 
\begin{proof}
Let $S$ be a finite set of points in $\PP\CC^2$ lying on three concurrent lines $\ell_1,\ell_2,\ell_3$. Suppose $S$ determines no ordinary lines: we will prove $S$ has at most four points, and then obtain a contradiction. Assume $\ell_3$ has the most points. After applying a projective automorphism, we may assume the common point is $[1 : 0 : 0]$, and $\ell_3$ is the line at infinity. Projective automorphisms preserve all incidence statements, so we lose no generality in performing this transformation. Now $\ell_1$ and $\ell_2$ are lines of the form $y = y_1$, $y = y_2$, and we have
\begin{figure}
\includegraphics[width=300px]{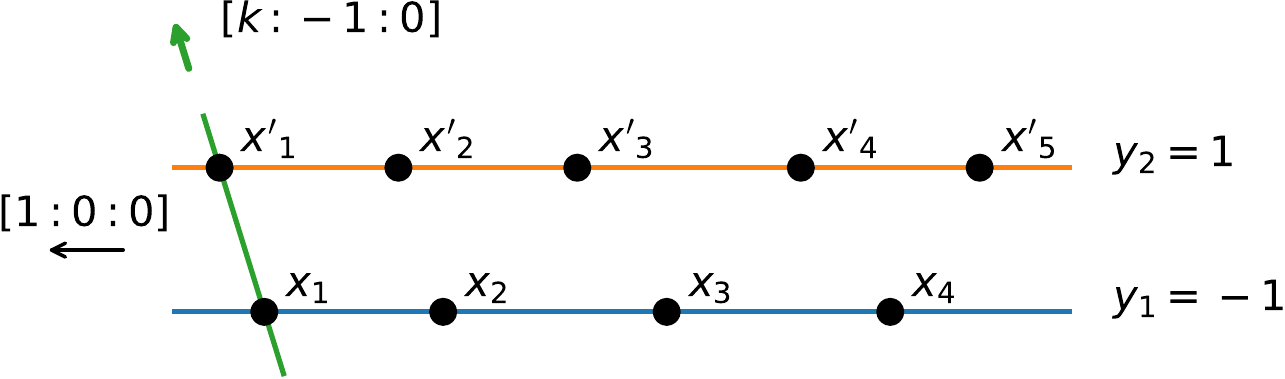}
\caption{The finite points arising in \S\ref{section:simplified_kn}. The green line is the line of slope $k$ with minimal $x$-intercept.}
\label{fig:kn_fig}
\end{figure}
\begin{equation*}
    \ell_1 \cap S = \{(x_1,y_1),\ldots,(x_A,y_1)\},\quad \ell_2 \cap S = \{(x_1',y_2),\ldots,(x_B',y_2)\}
\end{equation*}
where $x_j$, $x_j'$ are complex numbers. We now apply the projective automorphism $[x : y : z] \mapsto [e^{i\theta} x : e^{i\theta} y : z]$; the common point is still $[1 : 0 :0]$, and this map has the effect of multiplying the complex numbers $x_j$, $x_j'$ by the phase $e^{i\theta}$. For all but finitely many values of $\theta$, the transformed complex numbers $\{x_j\}$ and $\{x_j'\}$ will have pairwise distinct real parts. After relabeling the indices, we let
\begin{equation*}
    \Re(x_1) < \cdots < \Re(x_A),\quad \Re(x_1') < \cdots < \Re(x_B').
\end{equation*}
 Each point on $\ell_3$ off of the common point $[1 : 0 : 0]$ is of the form $[k : -1 : 0]$, so we let 
\begin{equation*}
    \ell_3 \cap S = \{[k_1 : -1 : 0], \ldots, [k_C : -1 : 0]\}
\end{equation*}
where $C \geq A,B$. 

Let $k \in \{k_1,\ldots,k_C\}$; consider the pencil of lines passing through $[k : -1 : 0]$ and a finite point of $S$. Each line in this pencil is of the form $x + k y = d$, with $k$ the slope and $d$ the $x$-intercept. Let $d^*$ be the $x$-intercept of this pencil with minimal real part. Because we have assumed $S$ determines no ordinary lines, the line $x + ky = d^*$ must pass through at least two finite points of $S$. So there must be some $x \in \{x_1,\ldots,x_A\}$ and $x' \in \{x_1',\ldots,x_B'\}$ such that $x + ky_1 = x' + ky_2 = d^*$. We must have $x = x_1$ and $x' = x_1'$, because otherwise, 
\begin{equation*}
    \Re(x_1 + ky_1) < \Re(x + ky_1) = \Re(d^*) \text{ or } \Re(x_1' + ky_2) < \Re(x' + ky_2) = \Re(d^*)
\end{equation*}
which violates our hypothesis on $d^*$. Here we use the fact that ordering complex numbers by their real part respects addition, and we avoid the fact that this ordering does not respect multiplication by writing the complex line $x + ky = d$ so that $x$ has coefficient one. See Figure \ref{fig:kn_fig} for a depiction of this situation.

Because the line $x + ky = d^*$ passes through $(x_1, y_1)$ and $(x_1', y_2)$, we have $x_1 + k y_1 = x_1' + k y_2$, and so $k = -\frac{x_1-x_1'}{y_1-y_2}$. Thus there is only one possible value of $k$, so the line $\ell_3$ passes through only one point of $S$ (not including the point of concurrency). We conclude that $C = 1$, and because we chose the line at infinity to have the most points, $A = B = C = 1$. Including the point of concurrency, there are at most four points in $S$, and therefore $S$ cannot be a Sylvester-Gallai configuration.
\end{proof}

It is worth noting that although we order complex numbers by their real part in the above proof, all we need is an ordering that respects addition and multiplication by positive real numbers. The orderings over $\CC$ with these properties are all of the form $x > y$ if and only if $\Re (e^{i\theta}(x-y)) > 0$, so ordering points by the real coordinate is fully general. 

\subsection{Proof of Theorem \ref{concurrent_bound}}

We now prove Theorem \ref{concurrent_bound}. We generalize the idea presented above to show that a Sylvester-Gallai configuration lying on $m$ concurrent lines contains at most $m-2$ points on each line. We build up to this bound in steps, first proving that each line contains at most $\binom{m-1}{2}$ points, then proving that each line contains at most $3m-9$ points (for $m \geq 4$), and finally that each line contains at most $m-2$ points. The initial bound is obtained by proving that the points on one line inject into the edges of a complete graph on $m-1$ vertices. The $3m-9$ bound comes from proving this graph must actually be planar, and the $m-2$ bound comes from proving this graph is acyclic, and thus has at most $m-2$ edges. The proof that the graph is acyclic involves a surprising application of Green's identity from multivariable calculus. 

\subsubsection*{Proof setup}

Let $S \subset \CC^2$ be a Sylvester-Gallai configuration lying on a family $\ell_1,\ldots,\ell_m$ of concurrent lines, with $\ell_m$ having the most points of $S$. After applying a projective automorphism we may assume $\ell_m$ is the line at infinity, and the common point is $[1 : 0 : 0]$. Then the lines $\ell_a$, $1 \leq a \leq m-1$ are of the form $y = y_a$. The finite points of $S$ are of the form $(x,y_a)$, and the infinite points (not including the common point $[1 : 0 : 0]$) are of the form $[k : -1 : 0]$. After applying an automorphism as in \S\ref{section:simplified_kn}, we may assume that the $x$-coordinates of finite points in $S$ have distinct real parts. We distinguish the points on each line $\ell_a$ having minimal real coordinate:
\begin{equation*}
    x_a^* = \argmin_{(x,y_a) \in S} \Re(x).
\end{equation*}

Our proof will focus on the points 
\begin{equation*}
    S^* =  \{(x_1^*, y_1), \ldots, (x_{m-1}^*, y_{m-1})\}
\end{equation*}
and we will mostly ignore the other finite points.

\begin{figure}
\includegraphics[width=300px]{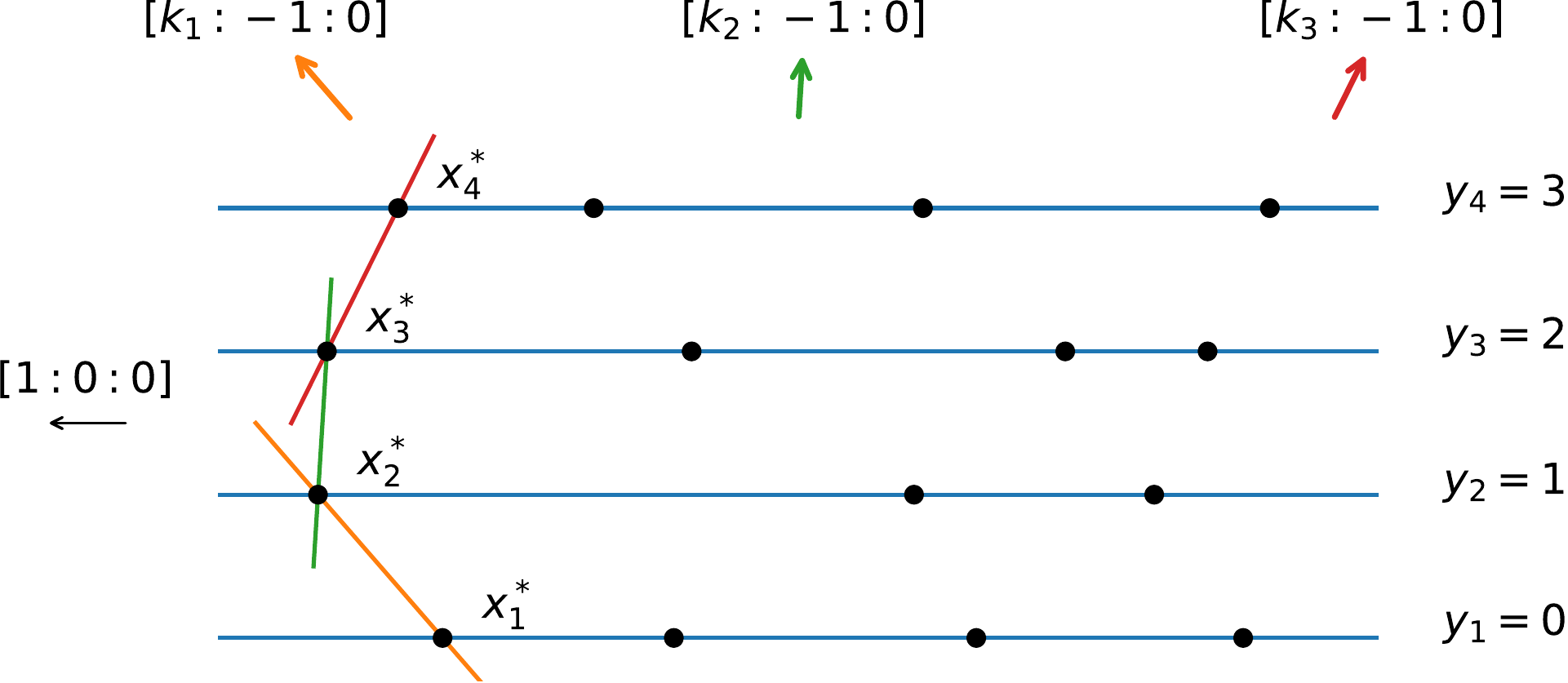}
\caption{Four concurrent lines, with the points on each line ordered by their real part. The colored lines have minimal $x$-intercept.}
\label{fig:four_concurrent}
\end{figure}

\subsubsection*{Constructing the graph on finite lines}
We now construct a graph $G$ on the $m-1$ vertices $\{y_1,\ldots,y_{m-1}\}$. We will add one edge for each point $[k : -1 : 0]\in S$ on the line at infinity. For each such point, the pencil of lines through it consists of lines of the form $x + ky = d$. We let $d_k^*$ be the $x$-intercept arising in this way with minimal real part. Then $x + ky = d_k^*$ can only pass through points of $S^*$. Indeed, suppose $(x, y_a)$ lies on the line $x + ky = d_k^*$. If $x \neq x_a^*$, then
\begin{equation*}
    \Re(x_a^* + ky_a) < \Re(x + ky_a) = \Re(d_k^*)
\end{equation*}
which violates our assumption on $d_k^*$. Now, because we assume $S$ admits no ordinary lines, the line $x + ky = d_k^*$ must pass through at least two points of $S^*$. We will choose two such points arbitrarily, say $(x_a^*, y_a)$ and $(x_b^*, y_b)$, and admit the edge $(y_a, y_b)$ to the graph $G$. Notice that 
\begin{equation*}
     k = -\frac{x_b^* - x_a^*}{y_b - y_a}
\end{equation*} 
so the point at infinity is determined by the edge---thus each point on the line at infinity will correspond to a different edge in the graph. Put differently, the edges of $G$ are in bijection with the points of $S$ lying on $\ell_m$. See Figure \ref{fig:four_concurrent} for a depiction of this situation. 

Notice that because $G$ has $m-1$ vertices, it has at most $\binom{m-1}{2}$ edges, so the line at infinity contains at most $\binom{m-1}{2}$ points. Already this is an interesting result, and is enough to prove Conjecture 4.6 from \cite{Z}. We will improve this result by investigating the graph $G$ further: first we will show $G$ is planar, and then we will show $G$ is acyclic, which implies that $G$ has at most $3m-9$ (for $m \geq 4$) and at most $m-2$ edges correspondingly.

\subsubsection*{Graph adjustments and properties} 
We apply some transformations to the set $S$ to simplify the rest of the proof. First we remove from consideration all vertices $y_a$ that are not adjacent to an edge in the graph $G$. Next, if necessary, we apply the projective automorphism from \S\ref{section:simplified_kn}, $[x : y : z] \mapsto [e^{i\theta} x : e^{i\theta} y : z]$, so that each of the values
\begin{equation*}
    \frac{1}{y_b-y_a}\det \begin{pmatrix}
        1 & 1 & 1 \\ 
        x_a^* & x_b^* & x_{c}^* \\
        y_a & y_b & y_{c}
    \end{pmatrix},\quad 1 \leq a,b,c \leq m-1
\end{equation*}
has real part 0 only if it is equal to 0. Notice that this automorphism keeps the points at infinity fixed and changes each of these determinants by a factor of $e^{i\theta}$, so the desired property holds for all but finitely many values of $\theta$. Geometrically, we are rotating the complex plane so none of these values align with the imaginary axis.
We do this for the following reason. If $x + ky = d$ is the line through $(x_a^*, y_a)$ and $(x_b^*, y_b)$, then 
\begin{equation*}
    -\frac{1}{y_b-y_a}\det \begin{pmatrix}
        1 & 1 & 1 \\ 
        x_a^* & x_b^* & x_{c}^* \\
        y_a & y_b & y_{c}
    \end{pmatrix} = x_{c}^* + ky_{c} - d. 
\end{equation*}
Thus our condition ensures that $\Re(x_{c}^* + ky_{c}) = \Re(d)$ only if $x_{c}^* + ky_{c} = d$, in which case $(x_a^*, y_a)$, $(x_b^*, y_b)$, $(x_{c}^*, y_{c})$ are collinear.

\begin{lemma}\label{lem:graph_G_props}
The graph $G$ satisfies the following.
\begin{enumerate}[label=(\alph*)]
    \item \label{lempt:G_1_eq_collinear} For $(y_a,y_b) \in G$ an edge, let $x + ky = d$ be the line passing through $(x_a^*, y_a)$ and $(x_b^*, y_b)$. Then 
    \begin{equation*}
        \Re(x_{c}^* + ky_{c}) \geq \Re(d),\quad 1\leq c \leq m-1
    \end{equation*}
    with equality only if $(x_a^*, y_a)$, $(x_b^*, y_b)$, and $(x_{c}^*, y_{c})$ are collinear. 

    \item \label{lempt:G_2_not_collinear} For $(y_a, y_b),(y_c,y_d) \in G$ two edges,  
    \begin{equation*}
       \frac{x_b^* - x_a^*}{y_{b} - y_a} \neq \frac{x_{d}^* - x_c^*}{y_{d} - y_c}
    \end{equation*}
\end{enumerate}
\end{lemma}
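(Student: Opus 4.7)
The plan is to derive both parts directly from the construction of $G$ combined with the generic-phase assumption that was introduced immediately before the lemma statement.

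For part \ref{lempt:G_1_eq_collinear}, I would first recall that the edge $(y_a,y_b)$ was placed in $G$ because the line $x+ky=d$ (with $d=d_k^*$) achieves the minimum real part of $x$-intercepts over the pencil of lines through $[k:-1:0]$ and finite points of $S$. For any $1\le c\le m-1$, the point $(x_c^*,y_c)$ is itself a finite point of $S$ and lies on the line $x+ky=x_c^*+ky_c$ through $[k:-1:0]$, so the minimality definition of $d_k^*$ gives the inequality $\Re(x_c^*+ky_c)\ge\Re(d)$ immediately. For the equality case, the determinantal identity already written out in the setup expresses $x_c^*+ky_c-d$ as a scalar multiple (with factor $-1/(y_b-y_a)$) of the $3\times 3$ determinant whose vanishing encodes collinearity of $(x_a^*,y_a),(x_b^*,y_b),(x_c^*,y_c)$. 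The generic rotation applied just before the lemma is precisely the statement that this scalar multiple has real part zero only if it is zero, so $\Re(x_c^*+ky_c)=\Re(d)$ forces $x_c^*+ky_c=d$, and consequently the three points lie on the common line $x+ky=d$.

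For part \ref{lempt:G_2_not_collinear}, I would invoke the bijection between edges of $G$ and points of $S$ on the line at infinity that was made explicit in the construction: the formula $k=-(x_b^*-x_a^*)/(y_b-y_a)$ shows that the slope associated to the edge $(y_a,y_b)$ is exactly the slope of the line giving rise to the corresponding infinite point $[k:-1:0]$. If two distinct edges shared a slope, they would correspond to the same point at infinity, contradicting the one-edge-per-point rule used to build $G$.

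The only place this plan could genuinely go wrong is the equality statement in part \ref{lempt:G_1_eq_collinear}: a priori, two complex numbers can have the same real part without being equal, so the inequality $\Re(x_c^*+ky_c)\ge\Re(d)$ alone does not force collinearity. But this is exactly what the generic rotation was tailor-made to rule out, so once that adjustment is in place the lemma reduces to a short bookkeeping argument with no further analytic input.
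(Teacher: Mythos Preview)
Your proposal is correct and follows essentially the same route as the paper: the paper's proof simply says that \ref{lempt:G_1_eq_collinear} ``follows from the definition of $G$ and the discussion prior to this lemma'' and that \ref{lempt:G_2_not_collinear} holds ``because we added precisely one edge to the graph for a given slope,'' and your write-up is just a faithful unpacking of exactly those two references. In particular, your identification of the generic-phase rotation as what handles the equality case in \ref{lempt:G_1_eq_collinear} is precisely the ``discussion prior to this lemma'' the paper is pointing to.
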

\begin{proof}
Property \ref{lempt:G_1_eq_collinear} follows from the definition of $G$ and the discussion prior to this lemma. Property \ref{lempt:G_2_not_collinear} is satisfied because we added precisely one edge to the graph for a given slope $k=-\frac{x_b^*-x_a^*}{y_b-y_a}$. 
\end{proof}
One should ignore the line at infinity at this point, and think of the graph as just arising from a set of points in $\CC^2$. Lemma \ref{lem:graph_G_props}\ref{lempt:G_1_eq_collinear} is the determining characteristic of the graph. 

Before proceeding, it is worth noting that in the real case, $G$ must be a disjoint union of paths. For example, in Figure \ref{fig:four_concurrent}, the graph $G$ has edges $(y_1, y_2)$, $(y_2, y_3)$, and $(y_3, y_4)$, so $G$ is a path on four vertices. The proof is not so hard: one can show that each vertex $y_a$ is adjacent to at most one value $y > y_a$ and at most one value $y < y_a$, thus each vertex has degree at most two. In addition, the graph $G$ has no cycles; indeed, the minimal $y$ value appearing in a cycle can only be connected to one other vertex, yielding a contradiction. These two facts---that vertices have degree at most two and $G$ has no cycles---proves that in the real case, $G$ is a disjoint union of paths. In the complex case we will establish that our graph $G$ is acyclic---this is more difficult. In the real case, we can analyze the structure of the graph by considering just the $y$-coordinates. No such analysis is possible in the complex case: whereas over the real numbers the vertex with minimal $y$ value must have degree one in $G$, over the complex numbers we cannot single out a vertex that must have degree one just by looking at the $y$ coordinates. Instead, we must consider the $y$ coordinates and $x$ coordinates together. Additionally, we must use all of this data to establish a global condition---acyclicity---as opposed to a local degree bound. We have sketched this comparison to the real numbers solely for the sake of illustration, we do not know of any interesting applications in the real case. 

\subsubsection*{The function $u$}
We define a piecewise linear convex function $u: \CC \to \RR$ which allows us to analyze whole regions of $\CC$ in our argument, rather than restricting our attention to the points $\{y_1,\ldots,y_{m-1}\} \subset \CC$; it will play a key role in the rest of the proof. 
\begin{definition}\label{u_defn}
We let $u: \CC \to \RR$ be given by
\begin{equation*}
    u(y) = \sup_{x+ky=d} \Re( d - ky )
\end{equation*}
where the supremum is taken over all lines $x + ky = d$ occuring from an edge in $G$; that is, pairs $(k,d)$ where
\begin{equation*}
    x_a + k y_a = x_b + ky_b = d
\end{equation*}
for $(y_a, y_b) \in G$. 
\end{definition}
Note that $u$ is convex as a function on $\RR^2$, because it is defined as the pointwise supremum of affine linear functions. Later we will apply a lemma from real analysis to this function in order to prove $G$ is acyclic. 

\begin{lemma}\label{u_props}
The function $u$ satisfies
\begin{enumerate}[label=(\alph*)]
    \item \label{lempt:u_1_yv} $u(y_a) = \Re(x_a^*)$ for all $y_a \in G$;
    \item \label{lempt:u_2_convex} $u(\lambda y + (1-\lambda) y') \leq \lambda u(y) + (1-\lambda) u(y')$, $y,y' \in \CC, 0 \leq \lambda \leq 1$;
    \item \label{lempt:u_3_edge} $u(\lambda y_a + (1-\lambda) y_b) = \lambda \Re(x_a) + (1-\lambda) \Re(x_b)$, $0 \leq \lambda \leq 1$;
    \item \label{lempt:u_4_polyreg} If $(x_a^*, y_a)$, $(x_b^*, y_b)$ lie on the line $x + ky = d$, then 
    \begin{equation*}
        u(y) = \Re (d - ky)
    \end{equation*}
    in a polygonal region whose interior contains the open line segment connecting $y_a$ and $y_b$. 
\end{enumerate}
\end{lemma}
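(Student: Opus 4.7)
My plan is to dispatch parts (a)--(c) as quick consequences of the definition of $u$ and the setup preceding the lemma, and to reserve most of the work for (d).

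Part (b) is immediate: $u$ is a pointwise supremum of $\RR$-affine functions on $\CC \cong \RR^2$, hence convex. For (a), any edge $(y_a, y_b) \in G$ with associated line $x + ky = d$ contributes the value $\Re(d - k y_a) = \Re(x_a^*)$ to the supremum, so $u(y_a) \geq \Re(x_a^*)$; the reverse inequality is exactly the content of Lemma \ref{lem:graph_G_props}\ref{lempt:G_1_eq_collinear} applied at $y_a$. For (c), direct substitution of $y = \lambda y_a + (1-\lambda) y_b$ into the affine function $\Re(d - ky)$ for the edge $(y_a, y_b)$ yields $\lambda \Re(x_a^*) + (1-\lambda) \Re(x_b^*)$, so $u(y)$ is at least this value on the segment; combined with convexity from (b) and part (a), equality follows.

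For (d), I would take the polygonal region
\begin{equation*}
R = \{y \in \CC : \Re(d - ky) \geq \Re(d' - k' y) \text{ for every edge } (y_p, y_q) \in G \text{ with line } x + k'y = d'\},
\end{equation*}
which is convex polygonal as an intersection of closed half-planes and on which $u \equiv \Re(d - ky)$ by construction. The open segment from $y_a$ to $y_b$ lies in $R$ by part (c); the nontrivial claim is that it lies in the \emph{interior}. Fix a competing edge $(y_p, y_q)$ and consider $g(y) = \Re(d - ky) - \Re(d' - k'y)$, which is $\RR$-affine on $\CC$. By Lemma \ref{lem:graph_G_props}\ref{lempt:G_1_eq_collinear} we have $g(y_a), g(y_b) \geq 0$, so $g$ is non-negative along the segment by convex combination; if it vanished at an interior parameter $\lambda_0 \in (0,1)$ it would have to vanish at both endpoints as well, since a non-negative affine function on $[0,1]$ can have an interior zero only if it is identically zero.

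The crucial step, which I expect to be the main obstacle, is ruling out such simultaneous vanishing. If $g(y_a) = 0$ then $\Re(x_a^* + k'y_a - d') = 0$, and the perturbation condition imposed on $S$ prior to this lemma forces the full equality $x_a^* + k'y_a = d'$, placing $(x_a^*, y_a)$ on the line $x + k'y = d'$; similarly $(x_b^*, y_b)$ lies on that line if $g(y_b) = 0$. Then $x + k'y = d'$ passes through both points of $S^*$ defining the edge $(y_a, y_b)$, so it coincides with $x + ky = d$ and in particular $k = k'$, contradicting the fact that distinct edges carry distinct slopes (Lemma \ref{lem:graph_G_props}\ref{lempt:G_2_not_collinear}). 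Thus no competing $g$ vanishes on the open segment, so the segment lies in the relative interior of every defining half-plane, hence in the 2D interior of $R$. The content of the lemma is really this interplay between the perturbation condition and the no-repeated-slopes guarantee; everything else is bookkeeping.
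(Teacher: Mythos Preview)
Your proposal is correct and follows essentially the same route as the paper: parts (a)--(c) are handled identically, and for (d) both you and the paper reduce to showing that a competing affine function $\Re(d'-k'y)$ cannot tie with $\Re(d-ky)$ along the segment $y_ay_b$, since equality at the endpoints forces collinearity via Lemma~\ref{lem:graph_G_props}\ref{lempt:G_1_eq_collinear} and then a slope clash via Lemma~\ref{lem:graph_G_props}\ref{lempt:G_2_not_collinear}. Your version is slightly more explicit in naming the region $R$ and checking the interior condition pointwise, but the argument is the same.
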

\begin{proof}
Property \ref{lempt:u_1_yv} follows from Lemma \ref{lem:graph_G_props}\ref{lempt:G_1_eq_collinear}. Indeed,
\begin{equation*}
    \Re(x_a^* + ky_a) \geq \Re(d)
\end{equation*}
for any such $k, d$, implying $\Re(x_a^*) \geq \Re(d - ky_a)$ as desired. 
Property \ref{lempt:u_2_convex} is convexity:
\begin{align*}
    u(\lambda y + (1-\lambda) y') &= \sup_{x+ky=d} \Re( d - k(\lambda y + (1-\lambda) y') ) \\
    &= \sup_{x+ky=d} \Re( \lambda (d - ky) + (1-\lambda) (d - ky') ) \\
    & \leq \lambda \sup_{x+ky=d} \Re(d-ky) + (1-\lambda) \sup_{x+ky=d} \Re(d-ky') \\
    &= \lambda u(y) + (1-\lambda) u(y').
\end{align*}
For property \ref{lempt:u_3_edge}, 
\begin{equation*}
    u(\lambda y_a + (1-\lambda) y_b) \leq \lambda \Re(x_a) + (1-\lambda) \Re(x_b)
\end{equation*}
by convexity and property \ref{lempt:u_1_yv}, and by the definition of $u$, 
\begin{equation*}
    u(\lambda y_a + (1-\lambda) y_b) \geq \Re(d - k (\lambda y_a + (1-\lambda) y_b)) = \lambda \Re(x_a) + (1-\lambda) \Re(x_b)
\end{equation*}
yielding equality. 

For \ref{lempt:u_4_polyreg}, observe that $f(y) = \Re(d - ky)$ is one of the affine linear functions the infimum in Definition \ref{u_defn} runs over. It is clear that $u$ is piecewise linear in polygonal regions. Those polygonal regions cannot intersect the segments $y_ay_b$ transversally, as this would contradict part \ref{lempt:u_3_edge}. So if $u(y)$ is not equal to $\Re(d - ky)$ in a region properly containing the segment $y_ay_b$, there is some other line $x + k'y = d'$ passing through $(x_c^*, y_c)$, $(x_d^*, y_d)$ and with corresponding function $g(y) = \Re(d' - k'y)$ such that $g(y) = f(y)$ on the line segment connecting $y_a$, $y_b$. But then 
\begin{equation*}
    \Re(x_a^*) = \Re(d' - k'y_a),\quad \Re(x_b^*) = \Re(d' - k'y_b)
\end{equation*}
and by Lemma \ref{lem:graph_G_props}\ref{lempt:G_1_eq_collinear}, this implies the four points $(x_a^*, y_a)$, $(x_b^*, y_b)$, $(x_c^*, y_c)$, $(x_d^*, y_d)$ are collinear. Thus the edges $(y_a, y_b), (y_c, y_d) \in G$ correspond to the same line in $\CC^2$ which violates Lemma \ref{lem:graph_G_props}\ref{lempt:G_2_not_collinear}.
\end{proof}

\subsubsection*{The graph $G$ is planar}
We now move on to proving $G$ is planar; see Figure \ref{fig:planar_embedding} for an example of our planar embedding. 


\begin{lemma}\label{lemma:G_planar}
The graph $G$ is planar. In particular, if we draw $G$ in the complex line $\CC$ by placing vertex $y_a$ at its value and drawing edges as straight line segments, we obtain a planar embedding. 
\end{lemma}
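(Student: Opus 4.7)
The plan is to argue by contradiction: assume two distinct edges $e_1 = (y_a, y_b)$ and $e_2 = (y_c, y_d)$ of $G$, drawn as straight segments in $\CC$, share a point that is not a common endpoint. Let $f(y) = \Re(d - ky)$ and $g(y) = \Re(d' - k'y)$ be the affine linear functions attached to the two edges, and let $R_f, R_g$ denote the polygonal regions supplied by Lemma \ref{u_props}\ref{lempt:u_4_polyreg}, in whose interiors the respective open segments lie.

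The crux is to produce a point $y^*$ lying in the interior of $R_f$ and satisfying $u(y^*) = g(y^*)$. Granted such a point, the inequality $u \geq g$ from the supremum definition of $u$, combined with the equality $u = f$ on a neighborhood of $y^*$, implies that the affine linear function $f - g$ is nonnegative on a neighborhood of $y^*$ and vanishes at $y^*$; affinity then forces $f \equiv g$, so $k = k'$ and $d = d'$, contradicting Lemma \ref{lem:graph_G_props}\ref{lempt:G_2_not_collinear}. In the generic case where the two segments cross transversally at an interior point, that crossing point serves as $y^*$ directly, and the case of two segments overlapping along a subsegment is handled identically at any interior point of the overlap.

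The main obstacle is the configuration in which one edge $e_1 = (y_a, y_b)$ passes through a third vertex $y_c$ of $G$ in $\CC$, since no crossing in the strict sense occurs at an interior point of both segments. Here I would observe first that Lemma \ref{u_props}\ref{lempt:u_3_edge} together with \ref{lempt:u_1_yv} forces $f(y_c) = \Re(x_c^*)$, which incidentally implies by Lemma \ref{lem:graph_G_props}\ref{lempt:G_1_eq_collinear} that $(x_a^*, y_a), (x_b^*, y_b), (x_c^*, y_c)$ are collinear in $\CC^2$. Since $y_c$ is a vertex of $G$, it is incident to some edge $e_3$ of $G$ (isolated vertices having been pruned), and for the associated affine linear function $g$ one has $g(y_c) = \Re(x_c^*)$ as well. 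Combined with $y_c$ lying in the interior of $R_f$ by Lemma \ref{u_props}\ref{lempt:u_4_polyreg}, the rigidity principle above applied at $y^* = y_c$ closes this case, once more by contradicting Lemma \ref{lem:graph_G_props}\ref{lempt:G_2_not_collinear}.
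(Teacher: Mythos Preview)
Your argument is correct and shares the paper's core idea: at a crossing, the two edge-associated affine functions $f,g$ must agree, forcing equal slopes and contradicting Lemma~\ref{lem:graph_G_props}\ref{lempt:G_2_not_collinear}. The route differs slightly. The paper works one-dimensionally: it restricts $u-f_1$ to the opposite edge segment via Lemma~\ref{u_props}\ref{lempt:u_3_edge}, observes this restriction is affine, nonnegative, and has an interior zero, hence vanishes identically, and then appeals to Lemma~\ref{lem:graph_G_props}\ref{lempt:G_1_eq_collinear} at the endpoints to deduce collinearity of all four points in $\CC^2$. You instead work two-dimensionally via Lemma~\ref{u_props}\ref{lempt:u_4_polyreg}, obtaining $f=u\ge g$ on a full neighborhood of the crossing point, so that the affine function $f-g$ is nonnegative near a zero and hence identically zero, giving $k=k'$ directly without a second pass through Lemma~\ref{lem:graph_G_props}\ref{lempt:G_1_eq_collinear}. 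Your version is a bit cleaner in this respect, and it is also more complete: you explicitly dispatch the degenerate case of an edge whose open segment contains a third vertex, which the paper's proof (phrased only for four distinct endpoints) does not address. One small correction: from $f\equiv g$ as real affine functions you can conclude $k=k'$ and $\Re(d)=\Re(d')$, but not $d=d'$; this is harmless, since $k=k'$ alone already contradicts Lemma~\ref{lem:graph_G_props}\ref{lempt:G_2_not_collinear}.
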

\begin{proof}
We show that if two edges cross, then the corresponding lines for those edges must have the same slope, which contradicts Lemma \ref{lem:graph_G_props}\ref{lempt:G_2_not_collinear}.
Two edges $\{y_a,y_b\}$ and $\{y_c,y_d\}$ cross if $y_a,y_b,y_c,y_d$ are all distinct and we have
\begin{equation*}
    \lambda y_a + (1-\lambda) y_b = \gamma y_c + (1-\gamma) y_d\quad \text{ with } 0 \leq \lambda \leq 1,\ 0 \leq \gamma \leq 1.
\end{equation*}
We denote this intersection point by $z \in \CC$. Let
\begin{equation*}
    x_a + k_1y_a = x_b+k_1y_b = d_1,\quad x_c + k_2y_c = x_d+k_2y_d = d_2.
\end{equation*}
By Lemma \ref{lem:graph_G_props}\ref{lempt:G_2_not_collinear}, $k_1 \neq k_2$. Consider the two affine linear functions 
\begin{equation*}
    f_1(y) = \Re(d_1 - k_1 y),\quad f_2(y) = \Re(d_2 - k_2 y).
\end{equation*}
Then by Lemma \ref{u_props}\ref{lempt:u_3_edge}, $f_1(z) = f_2(z) = u(z)$. Now consider $s_1, s_2: [0,1] \to \RR$ by
\begin{align*}
    s_1(\lambda) &= u(y) - f_1(y),\quad y = \lambda y_c + (1-\lambda) y_d\\
    s_2(\lambda) &= u(y) - f_2(y),\quad y = \lambda y_a + (1-\lambda) y_b.
\end{align*}
Then by the definition of $u$, $s_1, s_2 \geq 0$. But $s_1$ and $s_2$ are affine linear by Lemma \ref{u_props}\ref{lempt:u_2_convex}, and take a value of zero for some $\lambda \in [0, 1]$. In fact, because the values $y_a,y_b,y_c,y_d$ are all distinct, at least one of $s_1$ and $s_2$ must take a value of zero for $\lambda \in (0,1)$. If this is the case, $s_a$ must be uniformly zero, because it is greater than or equal to zero and affine linear. Suppose without loss of generality $s_1 = 0$. Then $f_1(y_c) = u(y_c)$ and $f_1(y_d) = u(y_d)$. Then 
\begin{equation*}
    \Re(x_c^* + k_1 y_c) = \Re(d_1) \text{ and } \Re(x_d^* + k_1 y_d) = \Re(d_1)
\end{equation*}
so by Lemma \ref{lem:graph_G_props}\ref{lempt:G_1_eq_collinear}, the four points $(x_a^*, y_a), (x_b^*, y_b), (x_c^*, y_c), (x_d^*, y_d)$ are all collinear. Thus $k_1 = k_2$, which contradicts Lemma \ref{lem:graph_G_props}\ref{lempt:G_2_not_collinear}. It follows that our embedding is indeed planar. 
\end{proof}
\begin{figure}\label{fig:graph_example}
\includegraphics[width=200px]{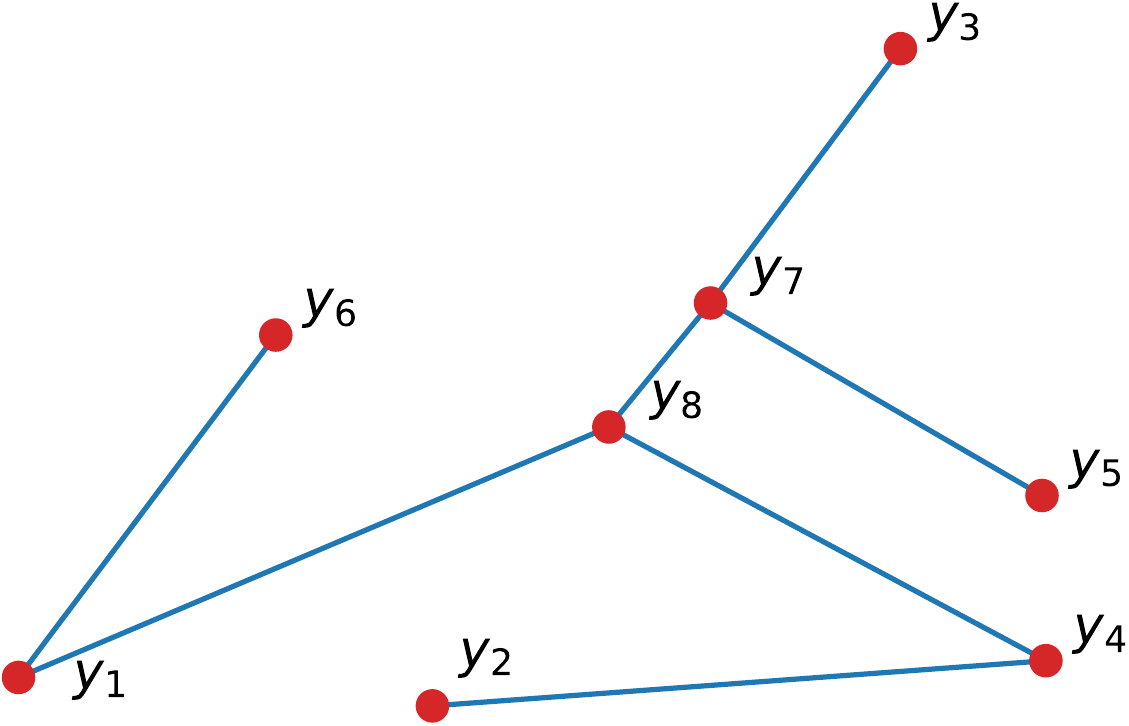}
\caption{Example of an actual graph $G$ arising from a choice of points $(x_a,y_a)$, drawn in $\CC$ according to the the value of $y$.}
\label{fig:planar_embedding}
\end{figure}
Because $G$ is planar and has at most $m-1$ vertices, an application of Euler's formula for planar graphs proves it has at most $3(m-1) - 6 = 3m-9$ edges (this holds for $m \geq 4$; for $m \leq 3$, it is clear $G$ has at most $m-2$ edges). This improves our bound on the maximum number of points on each line of the original configuration. Next we prove $G$ is acyclic, a stronger condition than being planar. 

\subsubsection*{The graph $G$ is acyclic}
Before proving $G$ is acyclic, we state a useful lemma from analysis. This lemma applies to a piecewise linear, locally convex function $v: \Omega \to \RR$, with $\Omega$ a polygonal region. By piecewise linear we mean that $\Omega$ can be partitioned into polygonal subsets such that $v$ is linear in each of these subsets. We will denote normal derivatives of $v$ at the boundary of $\Omega$ by $\frac{\partial v}{\partial \eta}$, and we consider the normal derivatives as pointing inwards toward $\Omega$. Throughout the discussion of this lemma, normal derivatives are taken in one direction only---this allows us to consider normal derivatives in different directions at points where $u$ is not differentiable. 
\begin{lemma}\label{green_formula_lemma}
For $v: \Omega \to \RR$ a piecewise linear locally convex function,
\begin{equation}\label{eq:green_formula_lem_eq}
    \int_{\partial \Omega} \frac{\partial v}{\partial \eta} ds \leq 0
\end{equation}
with equality if and only if $v$ is linear in $\Omega$. 
\end{lemma}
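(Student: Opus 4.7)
The plan is to reduce to the divergence theorem applied to each linear piece separately, then carefully bookkeep the contributions along the interfaces. First, by piecewise linearity, partition $\Omega$ into polygonal subregions $\Omega_1,\ldots,\Omega_N$ on each of which $v$ is affine linear. On each $\Omega_i$ the gradient $\nabla v$ is a constant vector field, so the divergence theorem applied to $\nabla v$ gives
\[
\int_{\partial \Omega_i} \frac{\partial v}{\partial \eta_{\mathrm{in},i}}\, ds = -\int_{\Omega_i} \Delta v \, dA = 0,
\]
and summing over $i$ yields $\sum_i \int_{\partial \Omega_i} \partial v/\partial \eta_{\mathrm{in},i}\, ds = 0$.

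Next I would split each $\partial \Omega_i$ into arcs lying on $\partial \Omega$ and ``crease'' arcs shared with an adjacent piece $\Omega_j$. Arcs of the first kind contribute exactly to $\int_{\partial \Omega} \partial v/\partial \eta\, ds$, since the inward normal of $\Omega_i$ there coincides with the inward normal of $\Omega$. On a crease between $\Omega_i$ and $\Omega_j$, let $\eta$ be the unit vector pointing from $\Omega_i$ into $\Omega_j$; the inward normals for $\Omega_i$ and $\Omega_j$ along this arc are $-\eta$ and $+\eta$ respectively, so the two pieces jointly contribute
\[
\int_{\mathrm{crease}} \bigl(\partial_\eta v|_{\Omega_j} - \partial_\eta v|_{\Omega_i}\bigr)\, ds,
\]
the integrated jump of the directional derivative $\partial_\eta v$ across the crease.

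Local convexity then finishes the argument: as one moves across the crease in the direction $\eta$, the directional derivative $\partial_\eta v$ is nondecreasing, so each jump is $\geq 0$. Combining the two decompositions gives
\[
0 \;=\; \int_{\partial \Omega} \frac{\partial v}{\partial \eta}\, ds \;+\; \sum_{\mathrm{creases}} (\text{nonnegative jump}),
\]
which is the desired inequality. Equality forces every crease jump to vanish, so $\nabla v$ is continuous across every crease; hence $v$ is affine on the union of any two adjacent pieces, and iterating through the connected polygon $\Omega$ shows $v$ is linear.

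The main obstacle I anticipate is purely geometric bookkeeping: one has to make sure every boundary arc is counted exactly once as either a piece of $\partial \Omega$ or a crease shared between exactly two subregions, handle the measure-zero set of vertices where several creases meet, and keep signs straight when converting between inward and outward normals in the divergence theorem. Once this accounting is set up, the estimate is really just the statement that a piecewise linear convex function has nonnegative distributional Laplacian concentrated on its creases.
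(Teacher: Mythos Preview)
Your proof is correct and is essentially the same argument as the paper's: both partition $\Omega$ into its linear pieces, apply Green's identity/the divergence theorem on each to get zero, and then observe that the total decomposes into the boundary integral plus a sum of nonnegative crease contributions coming from convexity, with equality forcing all creases to be flat. The only differences are cosmetic---you phrase the crease term as a jump $\partial_\eta v|_{\Omega_j} - \partial_\eta v|_{\Omega_i}$ while the paper writes it as $\tfrac{\partial v}{\partial \eta_+} + \tfrac{\partial v}{\partial \eta_-}$, and you invoke connectedness for the equality case whereas the paper leaves that implicit.
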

This lemma comes from treating $v$ as a subharmonic function. A subharmonic function has the property that the average value over each ball $B_r(x) = \{y\ |\ |y - x| \leq r\}$ is greater than or equal to the value at the center of that ball. We say $v$ is strictly subharmonic if that inequality is strict for one of these balls. If $v$ is a $C^2$ subharmonic function, then $\Delta v \geq 0$ (recall that $\Delta = \nabla \cdot \nabla$ is the Laplace operator), and if $v$ is strictly subharmonic, $\Delta v > 0$ somewhere. By Green's identity from multivariable calculus, for $v \in C^2(\Omega) \cap C(\overline{\Omega})$ strictly subharmonic,
\begin{equation*}
    \int_{\partial \Omega} \frac{\partial v}{\partial \eta} ds = -\int_{\Omega} \Delta v dx < 0.
\end{equation*}
Intuitively, subharmonic functions must on average decrease from the boundary, so the normal derivatives must on average be negative. Convex functions are always subharmonic, and the only harmonic convex functions are the linear functions. For simplicity we prove Lemma \ref{green_formula_lemma} in the special case that $v$ is piecewise linear and convex, but the same idea applies to any sufficiently regular subharmonic function. See \cite{MT} for more information on Green's identity and subharmonic functions. 
\begin{proof}[Proof of Lemma \ref{green_formula_lemma}]
\begin{figure}
\includegraphics[width=200px]{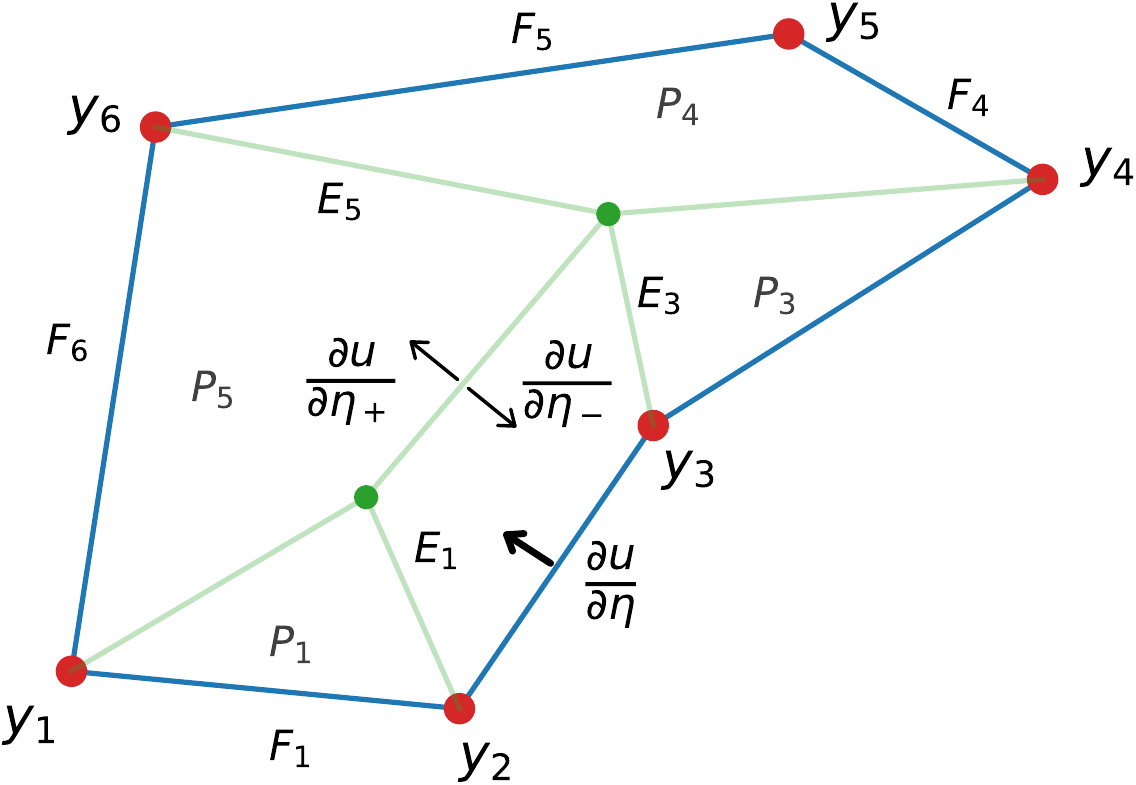}
\caption{We decompose $\Omega$ into piecewise linear regions, and apply Green's identity in each of these regions.}
\label{green_lemma_im}
\end{figure}
Let $\Omega$ be partitioned into polygonal regions $P_1, P_2, \ldots, P_n$, on which $v$ is linear. Let the bounding edges of all these polygons lying inside $\Omega$ be $E_1, E_2, \ldots, E_k$, and let the bounding edges of $\Omega$ be $F_1, F_2, \ldots, F_r$. See Figure \ref{green_lemma_im} depicting this decomposition. Now that we have partitioned $\Omega$ in this way, $v$ is of class $C^2$  in each of our regions, so we are in position to apply Green's identity:
\begin{equation*}
    \int_{\partial P_j} \frac{\partial v}{\partial \eta}\, ds = \int_{P_j} \Delta v\, dx = 0.
\end{equation*}
Summing this identity over all polygons, we obtain an integral over all edges $E_j$, $F_j$.
\begin{align*}
    0 &= \sum_j \int_{\partial P_j} \frac{\partial v}{\partial \eta} ds \\
    &= \sum_j \int_{F_j} \frac{\partial v}{\partial \eta} ds + \sum_j \int_{E_j} \left(\frac{\partial v}{\partial \eta_+} + \frac{\partial v}{\partial \eta_-}\right)ds \\
    &= \int_{\partial \Omega} \frac{\partial v}{\partial \eta} ds + \sum_j \int_{E_j} \left(\frac{\partial v}{\partial \eta_+} + \frac{\partial v}{\partial \eta_-}\right)ds 
\end{align*}
The values $\frac{\partial u}{\partial \eta_+}$ and $\frac{\partial u}{\partial \eta_-}$ are the normal derivatives pointing in each possible direction along the edge $E_j$; these numbers are in general different. We get the term $\frac{\partial v}{\partial \eta_+} + \frac{\partial v}{\partial \eta_-}$ because for each internal edge $E_j$, we integrate the normal derivative pointing in both possible directions. 

We claim 
\begin{equation}\label{eq:partial_both_sum}
    \frac{\partial v}{\partial \eta_+} + \frac{\partial v}{\partial \eta_-} \geq 0
\end{equation}
for each internal edge $E_j$, and if equality is achieved everywhere then $u$ is linear. For $y \in E_j$, let $f(s) = v(y + s\mathbf{\eta_+})$ where $\mathbf{\eta_+}$ is a unit vector perpendicular to $E_j$. Then $f(s)$ is a piecewise linear convex function, and
\begin{equation*}
    \frac{\partial v}{\partial \eta_+} + \frac{\partial v}{\partial \eta_-} = \frac{\partial f(s)}{\partial s_+} \Big|_{s = 0} + \frac{\partial f(s)}{\partial s_-} \Big|_{s = 0}
\end{equation*}
where $\frac{\partial}{\partial s_+}$ is the derivative pointing in the positive direction, and $\frac{\partial}{\partial s_-}$ is the derivative pointing in the negative direction. We have
\begin{equation*}
    \frac{\partial}{\partial s_+} f(s)\Big|_{s = 0} + \frac{\partial}{\partial s_-} f(s)\Big|_{s = 0} = \lim_{h\to 0} \frac{s(h) + s(-h) - 2s(0)}{h} \geq 0
\end{equation*}
because $s(h) + s(-h) - 2s(0) \geq 0$ by convexity. We have proved (\ref{eq:partial_both_sum}), and if we have equality over all edges $E_j$, then $v$ is in fact linear on all of $\Omega$. Thus 
\begin{equation*}
    \int_{\partial \Omega} \frac{\partial v}{\partial \eta} ds = -\sum_j \int_{E_j} \left(\frac{\partial v}{\partial \eta_+} + \frac{\partial v}{\partial \eta_-}\right)ds \leq 0
\end{equation*}
and equality is achieved if and only if $u$ is linear. 
\end{proof}

\begin{remark}
We will soon apply Lemma \ref{green_formula_lemma} to our helper function $u$. In this context, one interpretation of Equation (\ref{eq:green_formula_lem_eq}) is that it synthesizes all the data contained in Lemma \ref{lem:graph_G_props}\ref{lempt:G_1_eq_collinear}. 
Along the edge connecting $y_a$ to $y_b$, the tangent plane to $u$ is given by the function $f(y) = \Re(x + ky - d)$, where $x + ky = d$ passes through $(x_a,y_a)$ and $(x_b,y_b)$. Lemma \ref{lem:graph_G_props}\ref{lempt:G_1_eq_collinear} says that the graph of these tangent planes lie below each point $(y_a, u(y_a))$. But because $u$ is convex, any tangent plane to $u$ lies below the entire graph of $u$. In this way, the boundary data to $u$ contains the information of Lemma \ref{lem:graph_G_props}\ref{lempt:G_1_eq_collinear}, and integrating that boundary data along the entire region collects all that data into one equation. 
\end{remark}
We now prove the main lemma. 
\begin{lemma}\label{acyclic_lemma}
The graph $G$ is acyclic. 
\end{lemma}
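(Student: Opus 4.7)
The plan is to suppose for contradiction that $G$ contains a cycle $y_{a_1}, y_{a_2}, \ldots, y_{a_n}, y_{a_1}$, with the $j$-th edge corresponding to the line $x + k_j y = d_j$, and to derive a contradiction by applying Lemma \ref{green_formula_lemma} to $u$ on the polygonal region $\Omega \subset \CC$ enclosed by this cycle (the existence of $\Omega$ as a region in $\CC$ is guaranteed by the planar embedding from Lemma \ref{lemma:G_planar}). Orient the cycle so that $\Omega$ lies on its left.

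The main step is to compute $\int_{\partial \Omega} \frac{\partial u}{\partial \eta}\,ds$ and show it vanishes. By Lemma \ref{u_props}\ref{lempt:u_4_polyreg}, along the $j$-th edge the function $u$ agrees with the affine function $f_j(y) = \Re(d_j - k_j y)$ in a two-dimensional neighborhood of the open segment, so the inward normal derivative of $u$ on that edge equals the (constant) inward normal derivative of $f_j$. Writing the tangent vector as $\tau_j = y_{a_{j+1}} - y_{a_j}$ and the inward unit normal as $i\tau_j/|\tau_j|$, a short computation gives
\begin{equation*}
\int_{\text{edge } j} \frac{\partial u}{\partial \eta}\,ds = \Im\bigl(k_j(y_{a_{j+1}} - y_{a_j})\bigr).
\end{equation*}
Summing over $j$, and using the key identity $k_j(y_{a_{j+1}} - y_{a_j}) = x_{a_j}^* - x_{a_{j+1}}^*$ (immediate from $x_{a_j}^* + k_j y_{a_j} = x_{a_{j+1}}^* + k_j y_{a_{j+1}} = d_j$), the sum telescopes around the cycle to zero. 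Hence $\int_{\partial \Omega} \frac{\partial u}{\partial \eta}\,ds = 0$.

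Now invoke the equality case of Lemma \ref{green_formula_lemma}: this forces $u$ to be a single affine function on $\Omega$. But by Lemma \ref{u_props}\ref{lempt:u_4_polyreg}, $u$ agrees with $f_j$ on a two-dimensional subset of $\Omega$ adjacent to the $j$-th edge, so the single affine function representing $u$ on $\Omega$ must coincide with each $f_j$. In particular $f_1 = f_2$ as affine functions, so $k_1 = k_2$, contradicting Lemma \ref{lem:graph_G_props}\ref{lempt:G_2_not_collinear}.

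I expect the main obstacle to be the boundary integral computation in Step 2: one has to be careful with the orientation of the cycle and the corresponding sign of the inward normal, and verify that Lemma \ref{u_props}\ref{lempt:u_4_polyreg} really does let us identify $u$ with $f_j$ in a neighborhood reaching into $\Omega$ rather than only along the edge. Once the integral is set up correctly, the telescoping identity for the $x_{a_j}^*$ is the natural place where the defining relation of the graph edges enters, and the finish via the equality case of Lemma \ref{green_formula_lemma} is automatic.
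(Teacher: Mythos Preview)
Your proposal is correct and follows essentially the same approach as the paper: apply Lemma~\ref{green_formula_lemma} to $u$ on the polygonal region bounded by a putative cycle, compute the boundary integral edge-by-edge via Lemma~\ref{u_props}\ref{lempt:u_4_polyreg}, and observe that the relation $k_j(y_{a_{j+1}}-y_{a_j})=x_{a_j}^*-x_{a_{j+1}}^*$ makes the sum telescope to zero. The only cosmetic difference is the order of the contradiction: the paper first argues $u$ cannot be linear on $\Omega$ (distinct edges correspond to distinct slopes) and then invokes the strict inequality in Lemma~\ref{green_formula_lemma}, whereas you compute the integral first and use the equality case to force $u$ linear, reaching the same contradiction $k_1=k_2$ via Lemma~\ref{lem:graph_G_props}\ref{lempt:G_2_not_collinear}.
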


\begin{proof}
Suppose $G$ has a cycle. After relabeling, this is a sequence of vertices $y_1,y_2,\ldots, y_n$, $n \geq 3$, where $(y_1, y_2), \ldots, (y_n, y_1) \in G$. In what follows, we will let $y_{n+1} = y_1$, $x_{n+1} = x_1$. Because the embedding described in Lemma \ref{lemma:G_planar} is planar, the points $y_1,\ldots, y_n$ and the line segments connecting them cut out a polygonal region in the plane---we order the points so that the inside of this polygon lies on the left hand side of each segment $y_{a+1} - y_a$. Let $\Omega$ be this polygonal region. Then $u$ is a locally convex function on $\Omega$, and by Lemma \ref{lem:graph_G_props}\ref{lempt:G_1_eq_collinear}, $u$ is only linear on $\Omega$ if all the points $(x_a^*, y_a)$ appearing in the cycle are collinear. This is impossible, as each line through points of $S^*$ corresponds to just one edge in $G$. So $u$ is a piecewise linear convex function on $\Omega$, which is not linear on all of $\Omega$. 
\begin{figure}
\includegraphics[width=200px]{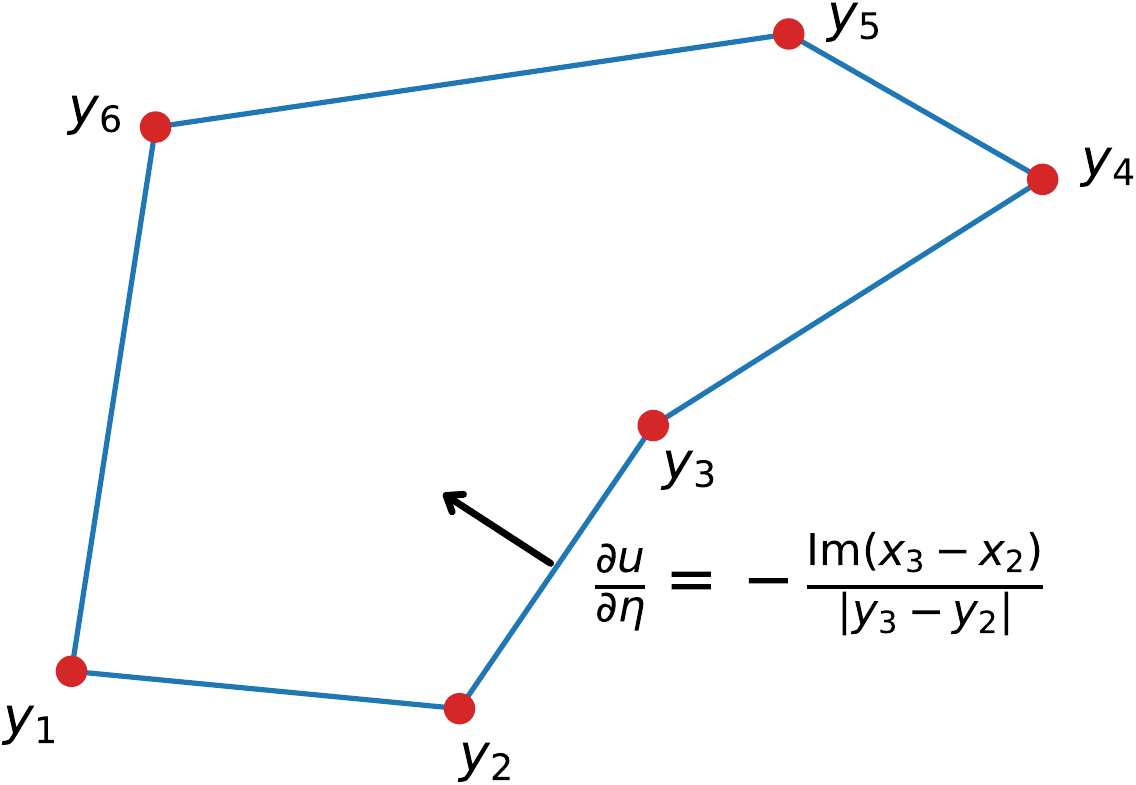}
\caption{We can compute the normal derivative of $u$ in terms of the values $x_a$.}
\label{fig:int_u_func}
\end{figure}
We may apply Lemma \ref{green_formula_lemma} to the function $u$ to obtain
\begin{equation}\label{int_u_pos_fact}
    \int_{\partial \Omega} \frac{\partial u}{\partial \eta} ds < 0.
\end{equation}
We will prove that this integral must actually equal zero, yielding a contradiction.

Lemma \ref{u_props}(c,d) gives a formula for $u$ in an open neighborhood of the segment connecting $y_a$ and $y_{a+1}$:
\begin{equation}\label{u_val_in_reg}
    u(y) = \Re\left(d - \frac{x_{a+1}^* - x_a^*}{y_{a+1} - y_a} y\right).
\end{equation}
But then the normal derivative of $u$ at a point on this segment is equal to the normal derivative of this affine linear function. The inward pointing normal vector along the edge connecting $y_a$, $y_{a+1}$ is given by 
\begin{equation*}
    v = i\frac{y_{a+1} - y_a}{|y_{a+1} -y_a|}
\end{equation*}
because multiplication by $i$ corresponds to a rotation by $\pi/2$. We now compute the normal derivative using Equation (\ref{u_val_in_reg}):
\begin{align*}
    \frac{\partial u}{\partial \eta} &= \lim_{h\to 0}\frac{u(y + hv) - u(y)}{h} \\
    &= \Re\left(-\frac{x_{a+1}^* - x_a^*}{y_{a+1} - y_a} \cdot i\frac{y_{a+1}-y_a}{|y_{a+1} - y_a|}\right) =  -\frac{\Re i(x_{a+1}^* - x_a^*)}{|y_{a+1} - y_a|} = \frac{\Im(x_{a+1}^* - x_a^*)}{|y_{a+1} - y_a|}.
\end{align*}
See Figure \ref{fig:int_u_func} for a depiction of this integrand. Substituting this equation into the integral (\ref{int_u_pos_fact}), 
\begin{equation*}
    \int_{y_a}^{y_{a+1}} \frac{\partial u (y)}{\partial \eta} ds = \int_{y_a}^{y_{a+1}} \frac{\Im(x_{a+1}^* - x_a^*)}{|y_{a+1} - y_a|} ds = \Im(x_{a+1}^* - x_a^*). 
\end{equation*}

Summing these integrals over all the edges $y_a$, $y_{a+1}$, we find
\begin{equation*}
    \int_{\partial \Omega} \frac{\partial u(y)}{\partial \eta} ds = \sum_{a=1}^n \Im(x_{a+1}^* - x_a^*) = 0.
\end{equation*}
This fact contradicts Equation (\ref{int_u_pos_fact}), and we are done.
\end{proof}

Because the graph $G$ is acyclic, and because $G$ has at most $m-1$ vertices, $G$ has at most $m-2$ edges. The edges of $G$ are in bijection with the points of $S \cap \ell_m$, and we chose $\ell_m$ to have the most points of $S$. Thus we conclude that each line $\ell_1,\ldots,\ell_m$ contains at most $m-2$ points (other than the concurrency point), establishing Theorem \ref{concurrent_bound}.

\begin{remark}\label{rmk:analysis_important}
Our use of analysis in this proof allows us to find a global obstruction to the existence of a cycle in $G$---this is important, because as we noted after Lemma \ref{lem:graph_G_props}, there is no local obstruction to the existence of a cycle. 
\end{remark}

\begin{remark}\label{rmk:int_normal}
Notice that along the edge connecting $y_a$ to $y_b$, the integral of the tangent derivative of $u$ is just the difference $u(y_b) - u(y_a) = \Re(x_b) - \Re(x_a)$. Analogously, the integral of the normal derivative of $u$ is the difference $\Im(x_b) - \Im(x_a)$. One can think of the imaginary parts of the complex numbers $x_a$ as determining these slopes: that is precisely their relevance to the function $u$. 
\end{remark}

\begin{remark}\label{acyclic_only_condition}
We believe that acyclicity is the \textit{only} condition on graphs $G$ arising from our construction. Computational evidence suggests that up to eight points, all acyclic graphs appear in this way.
\end{remark}

\section{Conclusion}
This paper establishes a sharp condition for Sylvester-Gallai configurations lying on a family of concurrent lines. This is an extremely special situation: most collections of points do not lie on a few concurrent lines. However, this is one of the few results on ordinary lines in the complex plane, and it involves a new approach to studying these limes---that of ordering complex numbers by their real part. Our hope is that this theorem will open up further study of complex Sylvester-Gallai configurations. The eventual goal of this study would be a proof of the following conjecture. 
\begin{conjecture}\label{full_structural_conjecture}
The only Sylvester-Gallai configurations in $\CC^2$ are the Fermat configurations and a finite number of exceptional examples.
\end{conjecture}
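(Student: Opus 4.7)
The plan is to combine the concurrent-lines bound of Theorem \ref{concurrent_bound} with classical inequalities from algebraic geometry (Hirzebruch, Langer) to extract strong combinatorial constraints on any Sylvester--Gallai configuration $S \subset \CC^2$, and then execute a classification argument that isolates the Fermat family and a bounded list of sporadic examples. The first move is local: fix a point $p \in S$ and let $m(p)$ denote the number of lines through $p$ meeting $S$ in at least one other point. Theorem \ref{concurrent_bound} applied at $p$ forces every such line to contain at most $m(p)-2$ further points of $S$, so
\begin{equation*}
    |S| \;\le\; 1 + m(p)\bigl(m(p)-2\bigr) \;=\; \bigl(m(p)-1\bigr)^2.
\end{equation*}
Dually, if $t_k$ denotes the number of $k$-rich lines determined by $S$, the Sylvester--Gallai hypothesis gives $t_2 = 0$, and standard double counting relates $|S|$ to the $t_k$'s and the degrees $m(p)$. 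I would use these identities to pin down, for every point $p$, a small range of admissible values of $m(p)$ and of the multiset of line-richnesses incident to $p$.

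Next I would feed these local constraints into the Hirzebruch inequality
\begin{equation*}
    t_2 + t_3 \;\ge\; |S| + \sum_{k \ge 5} (k-4)\, t_k,
\end{equation*}
together with Langer's refinement, to obtain global relations among the $t_k$'s. Since $t_2 = 0$, these inequalities typically become extremely restrictive: they force $t_3$ to absorb most of the left-hand side and push the configuration toward one in which every point lies on many $3$-rich lines. I would then combine this with the local bound above to conclude that the incidence structure of $S$ must come from a short list of \emph{combinatorial types}: either $|S|$ is divisible by $3$ with a distinguished pencil of three lines through each point (the Fermat pattern), or $|S|$ is bounded above by some absolute constant $N_0$ (the sporadic regime). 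Kelly's theorem~\cite{K} rules out embeddings that use an extra projective dimension, so after these reductions we may assume the configuration lies genuinely in $\PP\CC^2$.

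In the Fermat regime, I would argue as follows. The combinatorial constraints force a $3$-net structure: $S$ partitions into three sets, each lying on a pencil through a common point, with a bijective incidence pattern. A net of this form in $\PP\CC^2$ is a classical object, and results on nets (going back to Yuzvinsky and others) show that such a configuration must be projectively equivalent to the inflection points of a Fermat curve $x^n + y^n + z^n = 0$. Packaging this with the uniqueness statement would identify the entire Fermat family. In the sporadic regime $|S| \le N_0$, the classification reduces to a finite search, and the existence of only finitely many sporadic examples follows immediately; matching the known Hesse, Klein, and Wiman configurations is then a separate verification.

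The main obstacle, and the reason this is stated only as a conjecture, is the gap between the \emph{local} rigidity supplied by Theorem \ref{concurrent_bound} and the \emph{global} rigidity needed to force the Fermat pattern. Theorem \ref{concurrent_bound} controls what happens through one fixed point, but there is no comparable control over how these local pencils interact across different centers of concurrency. Bridging this gap---i.e.\ promoting the point-by-point bound $|S| \le (m(p)-1)^2$ and Hirzebruch-type inequalities into a structural statement forcing a $3$-net---is where I expect to get stuck, and where new ideas (perhaps an extension of the $u$-function technology of Lemma \ref{u_props} to simultaneously handle several concurrent pencils, or an algebraic-geometric input stronger than Hirzebruch) will be required.
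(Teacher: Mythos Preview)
The statement you are attempting to prove is not proved in the paper: it is Conjecture~\ref{full_structural_conjecture}, listed in the concluding section as an open problem and described as an ``older folklore conjecture.'' There is therefore no proof in the paper to compare your proposal against. You yourself seem aware of this, since your final paragraph explicitly flags the place where you ``expect to get stuck.''

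That said, your sketch has genuine gaps beyond the one you identify. The local bound $|S|\le (m(p)-1)^2$ via Theorem~\ref{concurrent_bound} is correct, and the Hirzebruch inequality with $t_2=0$ is correctly quoted, but the step where you ``combine'' these to conclude that $S$ falls into either a Fermat-type $3$-net or a bounded sporadic regime is pure assertion. Hirzebruch's inequality with $t_2=0$ forces $t_3$ to be large, but it does not by itself force a $3$-net structure, nor does combining it with the pointwise bound $|S|\le (m(p)-1)^2$ yield any such dichotomy---the known sporadic examples (Klein, Wiman) already show that the interaction between these constraints is delicate, and there is no known mechanism that separates ``Fermat-like'' from ``sporadic'' at the level of the $t_k$'s alone. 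Your appeal to classification results for $3$-nets (Yuzvinsky et al.) is also premature: those results classify nets once you already know you have one, but producing the net structure from a general Sylvester--Gallai configuration is precisely the hard part. In short, the obstacle you name in your last paragraph---promoting local pencil bounds to a global $3$-net structure---is not a residual detail but the entire content of the conjecture, and nothing in the preceding paragraphs makes progress on it.
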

This conjecture is a slight relaxation of Problem 1.10 in Bokowski \& Pokora \cite{BP} (we allow for finitely many exceptional examples rather than allowing for only the known exceptional examples), but is a much older folklore conjecture. The Fermat configurations are extremely special: they lie on three non concurrent lines, and all other lines pass through exactly three points of the set. One could imagine many other theorems working toward Conjecture \ref{full_structural_conjecture} without proving the full result---here are some possibilities. 

\begin{conjecture}\label{op1}
If a configuration lies on $m > 3$ lines, and if each of those lines have more than $C(m)$ points, then the set admits an ordinary line.
\end{conjecture}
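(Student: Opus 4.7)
The plan is to extend the ordering-based framework of this paper from concurrent to non-concurrent families. First I would put one of the $m$ lines, say $\ell_m$, at infinity by a projective transformation and rotate so that the complex $x$-coordinates of the remaining points have pairwise distinct real parts. The $m-1$ affine lines then meet $\ell_m$ at distinct ideal points corresponding to distinct slopes, and each $\ell_a$ still carries an extremal point $x_a^*$ of minimum real part. For each slope $k$ realized in the configuration, the line $x + ky = d$ of minimum real intercept $d = d_k^*$ would again serve as a candidate ordinary line: if it passes through exactly two points of $S$, we are done, and otherwise it contains at least three collinear points, which provides usable structural data in the spirit of Lemma \ref{lem:graph_G_props}\ref{lempt:G_1_eq_collinear}.

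The next step is to construct a generalization of the graph $G$. In the concurrent case all finite lines share the same slope at infinity, so a single graph on $\{y_1,\ldots,y_{m-1}\}$ records every minimum-intercept incidence. Without concurrency, the extremal point on $\ell_a$ depends on the slope $k$ under consideration, so I would build one graph $G_a$ for each line $\ell_a$: vertices are points of $\ell_a \cap S$, and edges record minimum-intercept pencils of slopes distinct from the slope of $\ell_a$. Each $G_a$ should inherit the planarity and acyclicity properties of $G$ by rerunning the arguments of Lemmas \ref{lemma:G_planar} and \ref{acyclic_lemma}, in particular applying Green's identity (Lemma \ref{green_formula_lemma}) to a suitably adapted convex function $u_a: \CC \to \RR$ encoding the minimum-intercept data for slopes transverse to $\ell_a$. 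Acyclicity of each $G_a$ would then give at most $O(m)$ edges per line, and a double-counting of incidences between $\ell_a$ and the remaining lines would bound $|S \cap \ell_a|$ by some polynomial $C(m)$.

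The analytic core would again be Green's identity. A cycle in any $G_a$ should force $\int_{\partial \Omega}(\partial u_a/\partial \eta)\,ds = 0$ on a polygonal region $\Omega$ cut out by the cycle, exactly as in Lemma \ref{acyclic_lemma}, contradicting the strict inequality guaranteed by Lemma \ref{green_formula_lemma} unless $u_a$ is linear on $\Omega$---which would in turn collapse the cycle into a single line through all its vertices and contradict the construction of $G_a$. With acyclicity in hand, one would likely be able to push $C(m)$ down to $O(m)$, matching the concurrent bound up to constants, but even a polynomial bound would suffice to resolve the conjecture as stated.

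The main obstacle, and the reason the conjecture remains open, is the absence of a global symmetry between the lines. In the concurrent case every pencil passes through the common point, so every minimum-intercept incidence is captured by a single combinatorial object and a single convex function $u$. Without concurrency, the functions $u_a$ depend on which line is distinguished, and a cycle in one $G_a$ does not obviously yield a cycle in any other; moreover, the extremal point $x_a^*$ on $\ell_a$ in one slope direction need not be extremal in another, so the various $G_a$'s do not glue into a single planar object. Overcoming this may require either a dual-plane reformulation, in which the $m$ lines become $m$ points $p_1,\ldots,p_m$ and the pencils through each $p_i$ become configurations to which Theorem \ref{concurrent_bound} directly applies, or a strengthening of Lemma \ref{green_formula_lemma} that handles a family of convex functions jointly---an analytic statement of independent interest. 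Even the case $m = 4$ without the concurrency hypothesis of Corollary \ref{corollary} would be a meaningful first step and is, to my knowledge, already open.
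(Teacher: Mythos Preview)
The statement you are addressing is not a theorem in the paper but an open conjecture; the paper offers no proof, only the remark that it ``seems substantially more difficult'' than Theorem~\ref{concurrent_bound} and that the concurrency assumption ``plays a crucial role early in our proof.'' Your proposal is therefore not to be compared against an existing argument---there is none---but assessed as a research plan, and indeed you yourself concede in the final paragraph that the conjecture remains open.

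As a plan, the central step contains a circularity. You define $G_a$ to have as vertices the points of $\ell_a\cap S$, and then hope that acyclicity of $G_a$ bounds $|\ell_a\cap S|$. But an acyclic graph on $N$ vertices has at most $N-1$ edges; this gives no bound on $N$ itself. In the concurrent case the vertex set of $G$ was $\{y_1,\dots,y_{m-1}\}$, a set of size $m-1$ fixed \emph{a priori}, and the edges were in bijection with the points on the distinguished line---that is what turned acyclicity into the bound $m-2$. Your $G_a$ reverses the roles: the quantity you wish to bound is now the number of vertices, not the number of edges, so acyclicity yields nothing. To salvage the approach you would need a graph whose vertex set has size controlled by $m$ alone and whose edges encode points on $\ell_a$; it is not clear how to build such an object once the extremal point on each line depends on the slope, exactly the obstruction you name at the end. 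The dual-plane idea you float is more promising in spirit, but note that dualizing $m$ lines to $m$ points does not by itself produce a concurrent family to which Theorem~\ref{concurrent_bound} applies---the pencils through each $p_i$ are concurrent, but the configuration of interest is the dual of $S$, not the $p_i$.
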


This conjecture is an analogue of Theorem \ref{concurrent_bound} for arbitrary lines, but seems substantially more difficult. In particular, one has to use the fact that all the lines have many points, as the Fermat configurations are an infinite family of Sylvester-Gallai configurations lying on only three lines. It is plausible that our approach to Theorem \ref{concurrent_bound} could be pushed further to prove results on non concurrent lines. A difficulty, however, is that the concurrency assumption plays a crucial role early in our proof: this assumption is what allowed us to ignore most of the finite points, and focus on a set of $m-1$ finite points, one selected from each line. 

\begin{conjecture}\label{op2}
Aside from the Hesse configuration, there are no complex Sylvester-Gallai configurations where each line passes through exactly three points.
\end{conjecture}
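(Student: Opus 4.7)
The plan is a proof by contradiction: suppose $S \subset \CC^2$ is a Sylvester-Gallai configuration distinct from the Hesse configuration in which every determined line contains exactly three points, and write $n = |S|$. Double-counting pairs of points gives $n(n-1)/6$ lines total and $r = (n-1)/2$ lines through each point, forcing $n \equiv 1$ or $3 \pmod 6$. By Kelly \& Nwankpa's classification \cite{KN} the only SG configuration with $n \leq 14$ is Hesse at $n = 9$, so the goal is to rule out $n \in \{13, 15, 19, 21, \ldots\}$.

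A first reduction comes from Hirzebruch's inequality applied to the $n$ lines dual to the points of $S$: with $t_k$ counting lines through exactly $k$ points, the inequality $t_2 + \tfrac{3}{4} t_3 \geq n + \sum_{k \geq 5}(2k - 9) t_k$ specializes, in the all-$t_3$ setting, to $n - 1 \geq 8$, i.e.\ $n \geq 9$. This is sharp at Hesse but vacuous for larger $n$. The first substantive step of my plan would be to invoke Langer's sharper inequality \cite{Langer} in the same dual setup; if it produces an explicit upper bound $n \leq N_0$, the remaining finite list of candidate values could in principle be resolved case-by-case, using the combinatorial classification of Steiner triple systems on $n$ points together with the algebraic-geometric constraint that such a system embed realizably as a line arrangement in $\CC^2$.

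If Langer's inequality is not by itself sufficient, the next step is to extend the pencil machinery of this paper. For each $p \in S$, the $r$ lines through $p$ form a pencil with two further points of $S$ on each line; after sending $p$ to the point at infinity $[1:0:0]$ and applying a generic rotation as in \S\ref{proof_of_theorem_section}, one obtains an acyclic planar graph $G_p$ as in Lemma \ref{acyclic_lemma} on the $r-1$ finite pencil lines. In the concurrent case, the edges of $G_p$ record the points on a single distinguished line $\ell_m$; here the ``distinguished line'' can be chosen to be any line $\ell^* \in S$ through $p$, and then $G_p$ has exactly two edges (one for each of the other two points of $S \cap \ell^*$). Varying $\ell^*$ over the $r$ lines through $p$ yields $r$ acyclic graphs $G_p^{\ell^*}$, which together with their analogues at other points $q \in S$ I would try to combine into a single global combinatorial object whose edges record all of the lines of $S$, and whose global topology can be controlled by a suitable generalization of the function $u$ of Definition \ref{u_defn}.

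The step I expect to be the main obstacle is precisely this aggregation: each individual acyclicity statement is tight at Hesse, so any strictly stronger conclusion must exploit the coupling between the pencils $\{G_p\}_{p \in S}$, and the real-part ordering used in Lemma \ref{acyclic_lemma} depends on a choice of direction $\theta \in \CC$ that must be made consistently across different pencils. If the analytic route stalls, the natural fallback is algebraic-geometric: use a Cayley--Bacharach type argument to force $S$ into the base locus of a pencil of plane cubics, deduce that $S$ lies in the intersection of two cubics with no common component, and conclude $|S| \leq 9$ by B\'ezout's theorem, contradicting $n > 9$.
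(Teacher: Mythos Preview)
The statement you are attempting to prove is \emph{Conjecture}~\ref{op2}: the paper states it as an open problem in the Conclusion and does not prove it. There is therefore no ``paper's own proof'' to compare your attempt against. What you have written is not a proof either; it is a research outline listing several possible approaches, each with explicitly acknowledged gaps. That is a reasonable thing to write, but it should not be labeled a proof proposal.

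On the substance of your outline: the Hirzebruch computation is correct and, as you note, yields only the lower bound $n\ge 9$. Your hope that Langer's inequality might instead produce an \emph{upper} bound $n\le N_0$ is almost certainly misplaced: Langer's inequality has the same shape as Hirzebruch's (a lower bound on a weighted combination of $t_2$ and $t_3$ in terms of the number of lines), so in the pure-$t_3$ dual arrangement it will again bound $n$ from below, not above. The pencil-aggregation idea recovers, for each choice of $p$ and $\ell^*$, exactly the bound $2\le m-2$ with $m=(n-1)/2$, i.e.\ $n\ge 9$ again; you correctly identify that any improvement must come from coupling the graphs $G_p^{\ell^*}$ across different $p$ and $\ell^*$, and you give no mechanism for doing so. Finally, the Cayley--Bacharach fallback presupposes that $S$ lies on two plane cubics with no common component, but for $n>9$ there is no reason this should hold: ten general points already lie on no cubic, and nothing in the Steiner-triple hypothesis forces the points onto low-degree curves. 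Absent such an argument, the B\'ezout step does not get off the ground.

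In short: the conjecture remains open, and none of the three routes you sketch closes the gap.
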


Combinatorially, a configurations of points and lines where every line passes through three points is called a \textit{Steiner triple system}. This conjecture states that the only Steiner triple system that can be embedded in $\CC^2$ is the Hesse configuration. Limbos \cite{Limbos} established this conjecture up to 15 points, and it follows from Hirzebruch's inequality that for any $m \geq 4$, there is no complex configuration with exactly $m$ points on every line. 
Conjecture \ref{op2} runs in the opposite direction to Conjecture \ref{op1}. Rather than showing that Sylvester-Gallai configurations cannot have too many points on several lines, the goal here is to prove that some line must contain many points. Indeed, if Conjecture \ref{full_structural_conjecture} is true, another statement similar in spirit to Conjecture \ref{op2} must be true as well: because the Fermat configurations have three lines passing through $n / 3$ points, any large enough Sylvester-Gallai configuration must have many points lying on one line. Beyond the open problems stated here, there are many avenues for future research, and we hope to see further results on complex Sylvester-Gallai configurations.

\section{Acknowledgements}
Many thanks to Frank de Zeeuw for suggesting the problem and for helpful discussions along the way. Thanks to the Baruch Combinatorics REU and organizer Adam Scheffer for supporting this work and providing research mentorship. Thanks to Wilhelm Schlag for providing suggestions with regard to Lemma \ref{green_formula_lemma}. The author would also like to thank an anonymous reviewer for extremely helpful and detailed comments.

\end{document}